\newtheorem{theorem}{Theorem}[section]
\newtheorem{proposition}[theorem]{Proposition}
\newtheorem{lemma}[theorem]{Lemma}
\newtheorem{corollary}[theorem]{Corollary}
\newtheorem{definition}[theorem]{Definition}
\theoremstyle{definition} 
\newtheorem{remark}[theorem]{Remark}
\newcommand{\C}{\mathbb C} 
\newcommand{\R}{\mathbb R} 
\newcommand{\Z}{\mathbb Z}
\numberwithin{equation}{section} 
\numberwithin{theorem}{section}
\numberwithin{figure}{section}
\begin{document}

\author[J.A.~Hoisington]{Joseph Ansel Hoisington} \address{Department of Mathematics, University of Georgia, Athens, GA 30602 USA}\email{jhoisington@uga.edu}

\title[Calibrations and Energy-Minimizing Maps]{Calibrations and Energy-Minimizing Mappings of Rank-1 Symmetric Spaces}

\keywords{Energy minimizing mappings, harmonic maps, calibrated geometries, symmetric spaces, systolic inequalities}
\subjclass[2010]{Primary 53C43, 53C35, 53C38 Secondary 53C26, 53C55, 58E20}

\begin{abstract}
We prove lower bounds for energy functionals of mappings from real, complex and quaternionic projective spaces to Riemannian manifolds.  For real and complex projective spaces, these lower bounds are sharp, and we characterize the family of energy minimizing maps which arise in these results.  We discuss the connections between these results and several theorems and questions in systolic geometry. 
\end{abstract}

\maketitle



\section{Introduction} 
\label{introduction} 

\subsection*{Statement of Results:} The main results of this work are lower bounds for energy functionals of mappings from real, complex and quaternionic projective spaces to Riemannian manifolds and characterizations of mappings which minimize energy in these results.  For real projective space, we will prove:  

\begin{theorem}
\label{rpn_thm}
Let $(\R P^{n},g_{0})$ be the $n$-dimensional real projective space with its canonical Riemannian metric $g_{0}$ of constant curvature $1$, with $n \geq 2$.  Let $(M,g)$ be a Riemannian manifold and $F: (\R P^{n}, g_{0}) \rightarrow (M^{m},g)$ a Lipschitz mapping.  Let $L^{\star}$ be the infimum of the lengths of paths in the free homotopy class of $F_{*}(\gamma)$, where $\gamma$ represents the non-trivial class in $\pi_{1}(\R P^{n})$, and let $E_{p}(F)$ be the $p$-energy of $F$, as in Definition \ref{p_energy_def} below. \\ 

Then for all $p \geq 1$, 

\begin{equation}
\label{rpn_thm_eqn}
\displaystyle E_{p}(F) \geq \frac{\sigma(n)}{2}\left(\frac{2\sqrt{n}}{\pi}L^{\star}\right)^{p}, \medskip 
\end{equation}
where $\sigma(n)$ is the volume of the unit $n$-sphere. \\ 

If $p > 1$ and equality holds for $p$, then $F$ is a homothety onto a totally geodesic submanifold of $(M,g)$ and equality holds for all $p \geq 1$.  If $F$ is a smooth immersion, equality for $p=1$ also implies these conditions. 
\end{theorem}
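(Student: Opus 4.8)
The plan is to prove \eqref{rpn_thm_eqn} by an integral-geometric averaging over the closed geodesics of $\R P^{n}$ (equivalently, by calibrating with the associated Crofton form), reducing it to a pointwise linear-algebra estimate, and then reading off the equality case by tracking rigidity through each step.

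\emph{Integral geometry.} With the metric $g_{0}$ every geodesic of $\R P^{n}$ is closed of one common length $\ell=\pi$ and represents the generator of $\pi_{1}(\R P^{n})\cong\Z/2\Z$; equivalently the geodesic flow $\phi_{t}$ on the unit tangent bundle $U\R P^{n}$ is periodic with period $\ell$. Writing the Liouville measure as $d\lambda=dV(x)\,d\omega(v)$ and applying the coarea formula for a periodic flow, for measurable $f$ on $U\R P^{n}$ one has
\[
\int_{U\R P^{n}} f\,d\lambda=\int_{\mathcal G}\Big(\int_{0}^{\ell} f(\phi_{t}\xi)\,dt\Big)d\mu(\xi),
\]
where $\mathcal G$ is the space of oriented geodesics with quotient measure $\mu$, so that $\mu(\mathcal G)\,\ell=\sigma(n)\sigma(n-1)/2$. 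Taking $f(x,v)=\|dF_{x}(v)\|$ (defined a.e., since $F$ is Lipschitz), the inner integral is the length of the loop $F\circ c$; as $c$ represents the generator, $F\circ c$ lies in the free homotopy class of $F_{*}(\gamma)$ and has length $\ge L^{\star}$. Hence
\[
\int_{\R P^{n}}\Big(\int_{S^{n-1}}\|dF_{x}(v)\|\,d\omega(v)\Big)dV(x)\ \ge\ L^{\star}\,\mu(\mathcal G).
\]

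\emph{Pointwise estimate and H\"older.} For $A=dF_{x}$ with singular values $\lambda_{1},\dots,\lambda_{n}$, the identity $\int_{S^{n-1}}v_{i}^{2}\,d\omega=\sigma(n-1)/n$ and Cauchy--Schwarz on $S^{n-1}$ give
\[
\int_{S^{n-1}}\|Av\|\,d\omega\ \le\ \Big(\sigma(n-1)\int_{S^{n-1}}\|Av\|^{2}\,d\omega\Big)^{1/2}=\frac{\sigma(n-1)}{\sqrt n}\Big(\textstyle\sum_{i}\lambda_{i}^{2}\Big)^{1/2},
\]
with equality exactly when the $\lambda_{i}$ are all equal, i.e.\ when $A$ is conformal. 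To reach general $p$ I would insert H\"older's inequality first on $S^{n-1}$ and then on $\R P^{n}$, bounding the left-hand side of the previous display by a constant multiple of $E_{p}(F)^{1/p}\,\vol(\R P^{n})^{1-1/p}$; combining with the integral-geometric bound and solving for $E_{p}(F)$ yields \eqref{rpn_thm_eqn}. The sharp constant $\tfrac{2\sqrt n}{\pi}$ combines the spherical second moment $\sigma(n-1)/n$ (the $\sqrt n$), the common geodesic length (the $\pi$), and the normalization of $E_{p}$ in Definition~\ref{p_energy_def}.

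\emph{Equality.} I would then trace equality back through the chain. Equality in the spherical Cauchy--Schwarz forces $dF_{x}$ to be conformal for a.e.\ $x$; equality in the per-geodesic comparison forces $F\circ c$ to be a shortest loop in its class, hence a closed geodesic of $(M,g)$, for a.e.\ $c$; and for $p>1$ equality in the H\"older step over the base forces the conformal factor $|dF_{x}|$ to be constant. Together these say that $F$ is a homothety carrying geodesics to geodesics, so its image is a totally geodesic submanifold, and once $F$ is such a homothety equality holds simultaneously for all $p\ge1$. For $p=1$ the base H\"older step is vacuous, so conformality of $dF$ need not come with a constant factor; this is where the smooth-immersion hypothesis enters, excluding rank-degenerate or non-homothetic equality configurations and recovering the same conclusion.

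The main obstacle I expect is the rigidity analysis rather than the inequality. One must justify the integral-geometric identity and the comparison $\length(F\circ c)\ge L^{\star}$ for a merely Lipschitz $F$ (measurability over $\mathcal G$, a Fubini argument, rectifiability of the image loops), and then upgrade a.e.\ conformality together with per-geodesic minimality into the global statement that $F$ is a homothety onto a totally geodesic submanifold — in particular treating the delicate $p=1$ smooth-immersion case, where the functional is insensitive to the extra H\"older rigidity that pins down the conformal factor when $p>1$.
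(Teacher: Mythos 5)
Your inequality argument is the paper's own: the spherical-average formula for $|dF_{x}|^{2}$, Cauchy--Schwarz on the fibre, H\"older over the base for $p>1$, then Fubini over the space of oriented geodesics $\mathcal{G}(\R P^{n},g_{0})$ together with $|F\circ\gamma|\geq L^{\star}$ because every geodesic generates $\pi_{1}(\R P^{n})$; the constants match and this part of your plan would compile into the paper's proof essentially verbatim. The $p>1$ rigidity is likewise the paper's: a.e.\ equality in Cauchy--Schwarz gives $F^{*}g=\varphi(x)\,g_{0}$ a.e., equality in H\"older makes $\varphi$ constant a.e., and equality then propagates to all $p\geq 1$. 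One step you gloss here: with only Lipschitz regularity the a.e.\ statements do not yet give an honest homothety; the paper gets smoothness because equality for any $p>1$ forces equality at $p=2$, so $F$ is a continuous map minimizing $2$-energy in its homotopy class and hence smooth, and only then is the constant-factor conclusion and the totally geodesic image legitimate.

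The genuine gap is the $p=1$ smooth-immersion case, which you flag as ``the main obstacle'' but leave without a mechanism. At $p=1$ there is no H\"older step, so equality yields only a possibly nonconstant semiconformal factor $\varphi(x)$ together with the fact that every $\gamma\in\mathcal{G}(\R P^{n},g_{0})$ (upgraded from a.e.\ to all by lower semicontinuity of mass) maps to a closed geodesic of length exactly $L^{\star}$, minimal in its free homotopy class. Nothing in your outline explains why an immersion with these properties must be a homothety; ``excluding non-homothetic equality configurations'' is precisely what needs proof, and the length-preserving property alone does not suffice --- the paper itself observes that non-isometric projective linear transformations of $\R P^{n}$ send all geodesics to closed geodesics of the common minimal length yet do not minimize $1$-energy. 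The paper's resolution is a deep external input: the equality conditions make $F^{*}g$ a Blaschke metric on $\R P^{n}$ (each unit-speed geodesic is length-minimizing up to $L^{\star}$ and all geodesics from a point reconverge there, so the first conjugate locus of each point is the point itself, cf.\ Remark \ref{blaschke_remark}); the Berger--Green--Kazdan--Yang solution of the Blaschke conjecture then forces $F^{*}g$ to be round, and finally the classification of conformal diffeomorphisms of the round sphere pins the conformal factor $\varphi$ down to a constant, so $F$ is a homothety onto a totally geodesic submanifold. Without invoking the Blaschke conjecture (or an equivalent rigidity theorem) your $p=1$ conclusion remains unproven.
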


This implies in particular that the identity mapping of $(\R P^{n},g_{0})$ minimizes $p$-energy in its homotopy class for all $p \geq 1$. \\

The case $p=2$ of Theorem \ref{rpn_thm} was proven by Croke \cite[Theorem 1]{Cr1}.  The $2$-energy, referred to simply as the energy of a mapping, is the classical energy functional of mappings of Riemannian manifolds and a fundamental invariant in the theory of harmonic maps.  It is a generalization of the Dirichlet integral of a real-valued function and the energy of a path in a Riemannian manifold.  The fact that the identity mapping of $(\R P^{n},g_{0})$ minimizes energy in its homotopy class was first established by Croke as a corollary of this result.  This is in contrast to the round $n$-sphere $(S^{n},g_{0})$, $n \geq 3$, for which the identity mapping is not energy minimizing in its homotopy class.  Conformal dilations give energy decreasing deformations of the identity mapping of $(S^{n},g_{0})$, $n \geq 3$, in fact the infimum of the energy over this family of mappings is $0$.  More generally, work of White \cite{Wh1} implies that in any closed Riemannian manifold $(M,g)$ with $\pi_{1}(M) =  \pi_{2}(M) = 0$ the identity mapping is homotopic to maps with arbitrarily small energy.  Croke also noted in \cite{Cr1} that the results of Smith \cite{Sm1} imply there are metrics arbitrarily close to the canonical metric on $\R P^{n}$, obtained by conformal deformations, for which the identity mapping is not even a stable critical point of the energy functional. \\ 

Croke observed in \cite[Theorem 3]{Cr1} that his argument for mappings of real projective space could be adapted to establish a lower bound for the energy of mappings of $\C P^{N}$ with its canonical metric $g_{0}$, and that this lower bound implies the identity mapping of $(\C P^{N},g_{0})$ minimizes energy in its homotopy class, but that this was already known because $(\C P^{N}, g_{0})$ is a K\"ahler manifold -- in fact, Lichnerowicz established in \cite{Li1} that any holomorphic mapping of compact K\"ahler manifolds minimizes energy in its homotopy class. \\ 

A natural extension of Theorem \ref{rpn_thm} to complex projective space would give lower bounds for the $p$-energy of mappings $F: (\C P^{N}, g_{0}) \rightarrow (M^{m},g)$ in terms of the infimum $A^{\star}$ of the areas of mappings $f:S^{2} \rightarrow M$ which represent the homotopy or homology class of $F_{*}(\C P^{1})$, where $(M,g)$ is a Riemannian manifold.  Unlike the strong characterization of equality in Theorem \ref{rpn_thm} however, basic properties K\"ahler manifolds imply that in any such optimal result, the equality case for the classical energy functional must be broad enough to include any holomorphic mapping from $\C P^{N}$ to a compact K\"ahler manifold -- we will explain this in detail at the beginning of Section \ref{complexes}.  Also, although conformal deformations of the canonical metric give Riemannian metrics on $\C P^{N}$ for which the identity mapping is not a stable critical point of the energy functional, as with $\R P^{n}$, on $\C P^{N}$ Lichnerowicz's theorem cited above also gives an infinite-dimensional family of metrics, obtained by K\"ahler deformations of the canonical metric, for which the identity mapping is energy minimizing in its homotopy class. \\ 

In Theorem \ref{cpn_thm} we will state and prove lower bounds for the $p$-energy of Lipschitz mappings $F:(\C P^{N},g_{0}) \rightarrow (M,g)$, $p \geq 2$, where $(M,g)$ is a Riemannian manifold.  The full characterization of equality in this result is somewhat technical and involves several partial results under weaker assumptions, but for the complex projective plane, our results imply that holomorphic mappings are essentially the only energy minimizing maps of this type.  We record this in the following:   

\begin{theorem}
\label{kahler_thm} 
Let $F:(\C P^{2}, g_{0}) \rightarrow (M^{m}, g)$ be a Lipschitz mapping to a Riemannian manifold.  Let $A^{\star}$ be the infimum of the areas of Lipschitz mappings $f:S^{2} \rightarrow (M,g)$ in the free homotopy class of $F_{*}(\C P^{1})$, and let $E_{p}(F)$ be the $p$-energy of $F$. \\ 

Then for all $p \geq 2$, 

\begin{equation}
\label{kahler_thm_eqn}
\displaystyle E_{p}(F) \geq \frac{\pi^{2}}{2} \left( \frac{4}{\pi} A^{\star} \right)^{\frac{p}{2}}. \medskip 
\end{equation}

If equality holds for $p = 2$ then $F$ is smooth and $F^{*}g$ is $U(1)$-invariant, where $U(1)$ represents the unit complex numbers acting on the tangent bundle $T\C P^{2}$.  Letting $\mathcal{V} \subseteq \C P^{2}$ be the domain on which $rk(dF) = 4$, $F^{*}g|_{\mathcal{V}}$ is a K\"ahler metric, $F(\mathcal{V})$ is minimal in $(M,g)$ and the second fundamental form of $F(\mathcal{V})$ in $(M,g)$ can be diagonalized by a unitary basis. \\ 

If $p > 2$ and equality holds for $p$, then $F$ has constant energy density and equality holds for all $p \geq 2$.  If $F$ is an immersion and equality holds for some $p > 2$, then $F$ is a homothety onto its image. 
\end{theorem}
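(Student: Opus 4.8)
The plan is to prove the case $p = 2$ as the essential inequality and to deduce the cases $p > 2$ from it by a convexity argument. Once the bound $E_{2}(F) \geq C A^{\star}$ is in hand, I would write the energy density as a function $e$ on $\C P^{2}$ with $E_{2}(F) = \int e\, dV$ and $E_{p}(F) = \int e^{p/2}\, dV$, and apply H\"older's inequality with exponents $\tfrac{p}{2}$ and $\tfrac{p}{p-2}$ over the finite-volume domain to get $E_{2}(F) \leq E_{p}(F)^{2/p}\,\vol(\C P^{2},\widetilde{g})^{1-2/p}$, which rearranges to the stated inequality. The exponent $\tfrac{p}{2}$ on $A^{\star}$ enters for exactly this reason, and the stated constant is recovered from $C$ together with the value of this final rearrangement. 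The equality discussion for $p > 2$ is then immediate: equality in the power-mean step forces $e$ to be constant almost everywhere, which both propagates equality to every $q \geq 2$ and, for an immersion, combines with the $p=2$ rigidity (conformality) to force a homothety.

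For the core $p=2$ inequality I would use the K\"ahler form $\omega$ of $\widetilde{g}$ as a calibration. The standard $\C P^{2}$ carries its family of projective lines $\ell \cong \C P^{1}$, parametrized by the dual plane; each $\ell$ is a complex curve and is therefore calibrated by $\omega$, hence minimal and area-minimizing in its homology class for the metric $\widetilde{g}$. Since each restriction $F|_{\ell}$ represents the free homotopy class of $F_{*}(\C P^{1})$, its image area satisfies $\area(F|_{\ell}) \geq A^{\star}$. I would then integrate this family of lower bounds against the natural measure on the space of lines and pass, by a Crofton-type (Fubini) identity, to an integral over $\C P^{2}$ of the average over the complex lines through each point of the two-dimensional Jacobian of $dF$.

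The passage from this averaged Jacobian to the energy density is a pointwise estimate of Wirtinger type. Writing $A = dF_{x}$ and letting $L = \mathrm{span}_{\R}(v, Jv)$ range over the complex lines in $T_{x}\C P^{2}$, the two-Jacobian of $F|_{\ell}$ at $x$ is $|A v \wedge A(Jv)|$, and $|A v \wedge A(Jv)| \leq |Av|\,|A(Jv)| \leq \tfrac{1}{2}\big(|Av|^{2} + |A(Jv)|^{2}\big)$. Averaging the right-hand side over the unit sphere of $T_{x}\C P^{2}$ — equivalently over $\C P^{1}$, since $|Av|^{2} + |A(Jv)|^{2}$ is constant on the Hopf fibers — yields a fixed multiple of $|dF_{x}|^{2}$, i.e.\ of the energy density. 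Combining this pointwise bound with the integral-geometric identity and with $\area(F|_{\ell}) \geq A^{\star}$ produces $E_{2}(F) \geq C A^{\star}$ with the required constant.

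The equality analysis then reads off the two estimates: equality forces $\area(F|_{\ell}) = A^{\star}$ for almost every line and forces equality in both Wirtinger inequalities at almost every point, namely $A(Jv) \perp Av$ and $|A(Jv)| = |Av|$ for all $v$. On the open set where $F$ is an immersion this says precisely that $A$ intertwines $J$ with an orthogonal complex structure on the image, so that $F^{*}g$ is compatible with $J$ and is K\"ahler, the image is calibrated and hence minimal, and the remaining rigidity (unitary diagonalizability of the second fundamental form) follows from differentiating these equality conditions. I expect two main obstacles. First, for a general K\"ahler metric — rather than the homogeneous Fubini--Study metric underlying Croke's computation in \cite{Cr1} — one must establish the integral-geometric identity with the correct normalization, since the homogeneity that trivializes the measure on the space of lines is no longer available; here it is the calibration property of $\omega$ that allows the holomorphic lines to play the role of Croke's geodesics. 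Second, the equality conclusions are a priori only almost everywhere and must be upgraded to the stated smooth geometric statements, which requires the regularity theory for energy-minimizing maps.
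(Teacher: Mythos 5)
Your proof of the inequality (\ref{kahler_thm_eqn}) is essentially the paper's own argument (the paper proves it as the $N=2$ case of Theorem \ref{cpn_thm}): your pointwise Wirtinger estimate on each complex line is exactly (\ref{cpn_pf_eqn_2}), your Crofton-type identity is implemented by defining the measure on $\mathcal{L}(\C P^{2})$ as the pushforward of the Riemannian volume of $U(\C P^{2},\widetilde{g})$ under $\mathsf{T}$ --- which makes Fubini automatic and settles your normalization worry, since the inputs $A(\mathcal{P},\widetilde{g})=\pi$ and $Vol(\C P^{2},\widetilde{g})=\pi^{2}/2$ are cohomological invariants of K\"ahler metrics --- and your global H\"older reduction of $E_{p}$ to $E_{2}$ yields the same constant as the paper's inline use of H\"older in (\ref{cpn_pf_eqn_4}). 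Your regularity concern is also dispatched the way you would hope: equality at $p=2$ makes $F$ energy-minimizing in its homotopy class, hence weakly harmonic, and continuous weakly harmonic maps are smooth.

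The genuine gaps are in your equality analysis. First, Wirtinger equality at every point ($A(Jv)\perp A(v)$ and $|A(Jv)|=|A(v)|$ for all $v$) says only that $F^{*}g$ is $J$-invariant, i.e.\ Hermitian; it is \emph{not} conformality, and in complex dimension $2$ it is strictly weaker: any complex-linear map with distinct singular values on two orthogonal complex lines satisfies both equalities, which is precisely why non-isometric holomorphic maps realize equality at $p=2$. Your final step ``constant energy density $+$ conformality $\Rightarrow$ homothety'' therefore collapses. The paper's route to the homothety conclusion is different: it first establishes that $F^{*}g$ is K\"ahler, deduces that $\omega^{*}$ is cohomologous to $(A^{\star}/\pi)\widetilde{\omega}$ so that $Vol(\C P^{2},F^{*}g)=A^{\star 2}/2$, observes that for $p>4$ the bound (\ref{kahler_thm_eqn}) then coincides with the elementary bound of Lemma \ref{elementary_lemma}, and invokes the equality case of that lemma. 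Second, ``K\"ahler'' does not follow pointwise from ``Hermitian'': closedness of $\omega^{*}$ is a differential condition, which the paper proves in (\ref{cpn_pf_eqn_6})--(\ref{cpn_pf_eqn_8}) by identifying $d\omega^{*}(\vec{v},\vec{w},I\vec{w})$ with the negative mean curvature of the line tangent to $\vec{w},I\vec{w}$ in the direction $\vec{v}$, which vanishes because equality forces $|F(\mathcal{P})|=A^{\star}$, so each $F|_{\mathcal{P}}$ is area-minimizing in its homotopy class and its image is minimal. Third, the image $F(\mathcal{V})$ cannot be ``calibrated, hence minimal'': the target is an arbitrary Riemannian manifold and carries no calibration. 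Minimality of $F(\mathcal{V})$ and the unitary diagonalization of its second fundamental form are proven together by a linear-algebra argument on shape operators, pairing each principal vector $\vec{u}$ with $I(\vec{u})$ and using the minimality of the image of the line $\mathsf{T}(\vec{u})$ to show the corresponding principal curvatures are opposite, so that the trace vanishes; this, rather than ``differentiating the equality conditions,'' is what the proof requires.
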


It would be interesting to determine whether Theorem \ref{kahler_thm} is true for mappings of $\C P^{N}$, $N \geq 3$.  The proof of Theorem \ref{kahler_thm} uses a lemma which gives the following local characterization of K\"ahler surfaces -- the author does not know of a reference for this fact:  

\begin{proposition}
\label{kahler_characterization}
Let $(X,h)$ be a Hermitian surface (of complex dimension $2$).  Suppose that for all $x_{0} \in X$ and all complex lines $\Pi$ in $T_{x_{0}}X$ there is a complex curve $\Sigma_{\Pi} \subseteq X$ (of complex dimension $1$) with $\Pi$ tangent to $\Sigma_{\Pi}$, and with the mean curvature of $\Sigma_{\Pi}$ vanishing at $x_{0}$.  Then $h$ is a K\"ahler metric. \\ 

In particular, if $(X,h)$ is a Hermitian surface in which all complex curves are minimal then $h$ is a K\"ahler metric. 
\end{proposition}

Proposition \ref{kahler_characterization} is a special case of a more general result which we state and prove in Lemma \ref{kahler_form_prop}. \\ 

For some mappings of $\C P^{N}$ to compact, simply connected K\"ahler manifolds, we will prove a stronger characterization of the equality case in Theorem \ref{cpn_thm} than holds in general -- we record this equality case, along with our general lower bound for the $p$-energy of mappings of $\C P^{N}$, in the following:  

\begin{theorem}
\label{cpn_holom_thm}
Let $F:(\C P^{N},g_{0}) \rightarrow (M,g)$ be a Lipschitz mapping to a Riemannian manifold and $A^{\star}$ the infimum of the areas of Lipschitz mappings $f:S^{2} \rightarrow M$ in the free homotopy class of $F_{*}(\C P^{1})$. \\ 

Then for all $p \geq 2$, 

\begin{equation}
\label{general_cpn_estimate}
\displaystyle E_{p}(F) \geq \frac{\pi^{N}}{N!} \left( \frac{2N}{\pi} A^{\star} \right)^{\frac{p}{2}}. \medskip 
\end{equation}

Suppose in addition that $(M,g)$ is a compact, simply connected K\"ahler manifold, and that the class of $F_{*}(\C P^{1})$ in $H_{2}(M;\Z)$ can be represented by a rational curve, that is, by a holomorphic mapping $f:\C P^{1} \rightarrow M$.  Then if equality holds for $p=2$, $F$ is holomorphic.  If $p > 2$ and equality holds for $p$, then $F$ is a homothety onto its image and equality holds for all $p \geq 2$. \\ 

Likewise, if $F_{*}(\C P^{1}) \in H_{2}(M;\Z)$ can be represented by an antiholomorphic mapping $f:\C P^{1} \rightarrow M$, equality for $p=2$ implies $F$ is antiholomorphic and equality for $p > 2$ implies $F$ is a homothety.  
\end{theorem}

The equality conditions in Theorems \ref{kahler_thm}, \ref{cpn_holom_thm} and \ref{cpn_thm} are related to those in several results of Ohnita \cite{Oh1} and Burns, Burstall, de Bartolomeis and Rawnsley \cite{BBdBR1}, which we will discuss in Section \ref{complexes}. \\  

White's results \cite{Wh1} cited above imply that the identity mapping of $(\C P^{N},g_{0})$ is homotopic to maps with arbitrarily small $p$-energy for all $ 1 \leq p < 2$.   They likewise imply that the identity mapping of quaternionic projective space $\mathbb{H} P^{N}$ with its canonical metric $g_{0}$ is homotopic to maps with arbitrarily small $p$-energy for all $1 \leq p < 4$ and the identity mapping of the Cayley projective plane with its canonical metric $(\mathcal{C}a P^{2},g_{0})$ is homotopic to maps with arbitrarily small $p$-energy for all $1 \leq p < 8$.  In this sense, Theorems \ref{kahler_thm} and \ref{cpn_holom_thm} are optimal, and the strongest results one can hope to establish in $(\mathbb{H} P^{N},g_{0})$ are lower bounds for the $p$-energy of mappings for $p \geq 4$. \\ 

In some ways, a result of this type for $\mathbb{H} P^{N}$ would be a natural extension of Theorems \ref{rpn_thm}, \ref{kahler_thm} and \ref{cpn_holom_thm} -- we will discuss this below -- but the optimal result for $\mathbb{H} P^{N}$ cannot be as strong as our results for $\R P^{n}$ and $\C P^{N}$:  the work of Wei \cite{We1} implies that for $N \geq 2$, the identity mapping of $(\mathbb{H} P^{N},g_{0})$ is not a stable critical point of the $4$-energy and in particular does not minimize $4$-energy in its homotopy class.  We will show that for $p \geq 4$, the $p$-energies of mappings of $(\mathbb{H} P^{N},g_{0})$  nonetheless do satisfy lower bounds similar to our results for mappings of $(\R P^{n},g_{0})$ and $(\C P^{N},\widetilde{g})$ above: 

\begin{theorem}
\label{hpn_thm}
Let $(\mathbb{H}P^{N},g_{0})$ be the quaternionic projective space with its canonical Riemannian metric $g_{0}$ normalized to have sectional curvature $K$ with $1 \leq K \leq 4$, with $N \geq 2$.  Let $F:(\mathbb{H}P^{N},g_{0}) \rightarrow (M^{m},g)$ be a non-constant Lipschitz map to a closed Riemannian manifold, and let $B^{\star}$ be the minimum mass of an integral 4-current $T$ in $M$ which represents the class of $F_{*}(\mathbb{H}P^{1})$ in $H_{4}(M;\Z)$. \\ 

Then for all $p \geq 4$,  
	
\begin{equation}
\label{hpn_thm_eqn}
\displaystyle E_{p}(F) > \frac{\pi^{2N}}{(2N+1)!} \left( K_{N}B^{\star} \right)^{\frac{p}{4}}, \medskip 
\end{equation}
where $K_{N}$ is a positive constant which depends only on $N$ and is given in (\ref{hpn_const_formula}) below.
\end{theorem}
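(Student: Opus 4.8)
The plan is to adapt Croke's integral-geometric method, as used for $\R P^{n}$ and $\C P^{N}$ above, to the family of totally geodesic quaternionic lines $\mathbb{H}P^{1} \cong S^{4}$ in $(\mathbb{H}P^{N},g_{0})$. These are parametrized by the quaternionic $2$-planes in $\mathbb{H}^{N+1}$, forming a compact homogeneous space $\mathcal{S} = \mathrm{Sp}(N+1)/(\mathrm{Sp}(2)\times \mathrm{Sp}(N-1))$ carrying an invariant measure $d\mu$, while through each point $x \in \mathbb{H}P^{N}$ the quaternionic lines containing $x$ are parametrized by $\mathbb{H}P^{N-1}$ with an invariant probability measure $d\nu_{x}$. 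Because $H_{4}(\mathbb{H}P^{N};\Z)=\Z$ is generated by $[\mathbb{H}P^{1}]$, every member of $\mathcal{S}$ is homologous to $\mathbb{H}P^{1}$, so each restriction $F|_{\mathbb{H}P^{1}}:S^{4}\to M$ represents the class $F_{*}(\mathbb{H}P^{1})$ in $H_{4}(M;\Z)$.

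First I would establish a per-sphere energy-volume inequality. For a Lipschitz $f:S^{4}\to M$ the pointwise bound $|df|^{4}\geq 16\,|J_{4}f|$ (arithmetic-geometric mean inequality on the singular values of $df$, with equality exactly when $df$ is conformal), together with the fact that the mass of $f_{*}[S^{4}]$ is at most $\int_{S^{4}}|J_{4}f|$ and at least $B^{\star}$, gives
\[
E_{4}(f)=\int_{S^{4}}|df|^{4}\;\geq\;16\int_{S^{4}}|J_{4}f|\;\geq\;16\,B^{\star}.
\]
Applying this to $f=F|_{\mathbb{H}P^{1}}$ and integrating over $\mathcal{S}$ yields $\int_{\mathcal{S}}E_{4}(F|_{\mathbb{H}P^{1}})\,d\mu\geq 16\,B^{\star}\,\mu(\mathcal{S})$. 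By Fubini on the incidence variety I would then rewrite the left-hand side as an integral over $\mathbb{H}P^{N}$,
\[
\int_{\mathcal{S}}E_{4}(F|_{\mathbb{H}P^{1}})\,d\mu=\int_{\mathbb{H}P^{N}}\Big(\int_{\mathbb{H}P^{N-1}}\big|dF_{x}|_{\ell}\big|^{4}\,d\nu_{x}(\ell)\Big)\,dV(x),
\]
reducing matters to the pointwise inequality $\int_{\mathbb{H}P^{N-1}}\big|A|_{\ell}\big|^{4}\,d\nu(\ell)\leq C_{N}\,|A|^{4}$ for every linear $A:\mathbb{H}^{N}\to\R^{m}$, with $\ell$ ranging over quaternionic lines, $|A|$ the Hilbert-Schmidt norm, and $C_{N}$ the sharp constant. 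Combining the displays gives $E_{4}(F)\geq \frac{16\,\mu(\mathcal{S})}{C_{N}}\,B^{\star}$, and computing $\vol(\mathbb{H}P^{N})=\frac{\pi^{2N}}{(2N+1)!}$, $\mu(\mathcal{S})$ and $C_{N}$ identifies $K_{N}$ and produces the case $p=4$. For $p>4$, Hölder's inequality on the fixed-volume domain gives $E_{p}(F)\geq E_{4}(F)^{p/4}\,\vol(\mathbb{H}P^{N})^{1-p/4}$, and substituting the $p=4$ bound yields exactly $\frac{\pi^{2N}}{(2N+1)!}(K_{N}B^{\star})^{p/4}$.

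I expect the sharp quartic averaging inequality — computing $C_{N}$ and analyzing its equality case — to be the main obstacle, and it is also the source of the strict inequality. Since $A\mapsto\int\big|A|_{\ell}\big|^{4}\,d\nu$ is a quartic form invariant under the isotropy action on $\mathbb{H}^{N}$, I would expand it in the invariants $|A|^{4}$, $\mathrm{tr}((A^{*}A)^{2})$ and the quaternionic-structure terms $\sum_{\mathcal{I}\in\{I,J,K\}}\mathrm{tr}(A^{*}A\,\mathcal{I}\,A^{*}A\,\mathcal{I})$, and then extremize. The decisive structural point is the contrast with $\C P^{N}$: there the restricted functional is the \emph{quadratic} $2$-energy, the analogous averaging is an exact identity, and equality is attained by (anti)holomorphic maps; here the restricted functional is the \emph{quartic} $4$-energy, the averaging is a genuine inequality, and its extremizers are those $A$ concentrated on a single quaternionic line, whereas equality in the per-sphere step forces $F|_{\mathbb{H}P^{1}}$ to be conformal on every $\ell$ through $x$. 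For $N\geq 2$ these two conditions are incompatible at almost every point, so for any non-constant $F$ at least one inequality in the chain is strict — consistent with Wei's result that the identity of $(\mathbb{H}P^{N},g_{0})$ is not even a stable critical point of the $4$-energy — and this strictness propagates through the Hölder step to all $p\geq 4$.
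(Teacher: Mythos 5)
The constructive half of your plan is essentially sound: the per-sphere bound $E_{4}(f)\geq 16\int_{S^{4}}|J_{4}f|\geq 16B^{\star}$, the Fubini exchange over the incidence variety of quaternionic lines, and the existence of a sharp constant $C_{N}$ in the quartic averaging inequality would together deliver a \emph{non-strict} inequality $E_{p}(F)\geq Vol(\mathbb{H}P^{N},g_{0})\left(K'B^{\star}\right)^{p/4}$ for some positive constant $K'$ depending only on $N$, and your H\"older step for $p>4$ is correct. But the strictness argument, which is the real content of the theorem, fails. You claim that for $N\geq 2$ the two pointwise equality conditions---$A=dF_{x}$ concentrated on a single quaternionic line $\ell_{0}$ (equality in your averaging inequality) and $A|_{\ell}$ conformal for almost every quaternionic line $\ell$ through $x$ (equality in the AM--GM step)---are incompatible. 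They are not. Orthogonal projection onto a quaternionic line is right-$\mathbb{H}$-linear, and a right-$\mathbb{H}$-linear map between quaternionic lines is automatically conformal: if $\ell_{0}=\lbrace (q,0,\dots,0)\rbrace$ and $\ell$ is spanned over $\mathbb{H}$ by a unit vector $(a_{1},\dots,a_{N})$, the projection sends $(a_{1}q,\dots,a_{N}q)$ to $(a_{1}q,0,\dots,0)$ and scales all norms by $|a_{1}|$. Consequently, if $A$ vanishes on $\ell_{0}^{\perp}$ and $A|_{\ell_{0}}$ is conformal, then $A|_{\ell}=A|_{\ell_{0}}\circ P_{\ell_{0}}|_{\ell}$ is conformal for \emph{every} $\ell$ through $x$; such nonzero $A$ satisfy both of your equality conditions simultaneously (and they do lie in your extremal set: every $S=A^{*}A$ supported on a quaternionic line gives the same value $\int \cos^{4}\theta_{\ell}\,d\nu$ of the quartic form, since $\langle S,P_{\ell}\rangle=\cos^{2}\theta_{\ell}\,\mathrm{tr}(S)$). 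So your pointwise contradiction evaporates, and with it the strict inequality for $p=4$ and, through H\"older, for all $p\geq 4$. This is exactly where the paper has to work globally rather than pointwise: it introduces a \emph{second} family of totally geodesic $4$-dimensional submanifolds, the $\C P^{2}$'s produced by the twistor fibration (Lemma \ref{tot_geo_lemma}), which also represent a generator of $H_{4}(\mathbb{H}P^{N};\Z)$ but are only ``one third calibrated'' by the Kraines form. If equality held, then $F^{*}g=\varphi\,g_{0}$ a.e.\ and almost every member of \emph{both} families would map to a mass-minimizing cycle of mass $B^{\star}$; averaging $\varphi^{2}$ over the two fibrations yields the incompatible identities (\ref{hpn_thm_pf_eqn_5}) and (\ref{hpn_thm_pf_eqn_6}), forcing $B^{\star}=0$. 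A single-family argument cannot detect this obstruction, and your proposed equality candidates (maps collapsing conformally onto a quaternionic line) are precisely the configurations it would need to exclude.

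A secondary problem is the constant. The theorem asserts the bound with the specific constant $K_{N}$ of (\ref{hpn_const_formula}), which aggregates contributions from both the $\mathbb{H}P^{1}$ family and the $\C P^{2}$ family. Your constant $16\mu(\mathcal{S})/C_{N}$ has no reason to coincide with it: your averaging inequality is saturated by maps concentrated on a quaternionic line, whereas the paper's pointwise estimate (Lemma \ref{twistor_lemma}) is saturated exactly by homotheties, so the two inequalities have different equality sets and hence genuinely different sharp constants; ``computing $\mu(\mathcal{S})$ and $C_{N}$ identifies $K_{N}$'' is asserted, not proved. This is the lesser issue---a lower bound with a different explicit positive constant would still be meaningful---but as written the proposal establishes neither the stated constant nor, more importantly, the strictness of the inequality.
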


The minimum $B^{\star}$ in Theorem \ref{hpn_thm} is over all integral currents homologous to $F_{*}(\mathbb{H}P^{1})$, which may be larger than the set of mappings $f:S^{4} \rightarrow M$ homotopic to $F_{*}(\mathbb{H}P^{1})$ as in Theorems \ref{rpn_thm}, \ref{kahler_thm} and \ref{cpn_holom_thm}.  However a theorem of White \cite{Wh2} shows that if $M$ is simply connected, $B^{\star}$ is equal to the infimum of the areas of mappings $f:S^{4} \rightarrow M$ in the free homotopy class of $F_{*}(\mathbb{H}P^{1})$.  We will discuss this after the proof of Theorem \ref{hpn_thm}, in Section \ref{quaternions}.  Although the identity mapping of $(\mathbb{H} P^{N},g_{0})$ does not minimize $4$-energy in its homotopy class, the proof of Theorem \ref{hpn_thm} does show that the identity mapping is $4$-energy minimizing among maps which satisfy an additional hypothesis -- we will also discuss this after the proof of Theorem \ref{hpn_thm}. \\ 

The strongest conjecture for mappings of $(\mathbb{H}P^{N},g_{0})$ which is consistent with Wei's result \cite[Theorem 5.1]{We1} in all dimensions is that the identity mapping of $(\mathbb{H} P^{N},g_{0})$ minimizes $p$-energy in its homotopy class for $p \geq 6$.  More precisely, Wei's results imply that the identity mapping of $(\mathbb{H} P^{N},g_{0})$ is an unstable critical point of the $p$-energy for $1 \leq p < 2 + 4(\frac{N}{N+1})$ and a stable critical point for $p \geq 2 + 4(\frac{N}{N+1})$.  For the Cayley plane $(\mathcal{C}a P^{2},g_{0})$ Wei's results imply that the identity mapping is an unstable critical point of the $p$-energy for $1 \leq p < 10$ and a stable critical point for $p \geq 10$.  It would be interesting to determine whether Theorem \ref{hpn_thm} gives an optimal lower bound for the $p$-energy of mappings of $(\mathbb{H} P^{N},g_{0})$.  More generally, it would be interesting to find optimal lower bounds for energy functionals of mappings of $(\mathbb{H} P^{N},g_{0})$ and $(\mathcal{C}a P^{2},g_{0})$ and study $p$-energy minimizing or approximately $p$-energy minimizing mappings of these spaces.  It would also be interesting to determine for which $p$ the identity mappings of $(\mathbb{H} P^{N},g_{0})$ and $(\mathcal{C}a P^{2},g_{0})$ are $p$-energy minimizing in their homotopy classes.  At the end of Section \ref{quaternions}, we will sketch one possible approach to this problem for $\mathbb{H} P^{N}$.  \\ 

For mappings homotopic to the identity of $\C P^{N}$, the lower bound in Theorem \ref{cpn_holom_thm} follows from a lower bound for the area of surfaces homologous to $\C P^{1}$ in $\C P^{N}$.  This lower bound follows from the calibrated structure given by the K\"ahler $2$-form of $(\C P^{N},g_{0})$.  The characterization of energy minimizing maps as holomorphic or antiholomorphic in Theorem \ref{cpn_holom_thm} is related to the fact that maps which realize the lower bound in (\ref{general_cpn_estimate}) must map linearly embedded $\C P^{1} \subseteq \C P^{N}$ to cycles which are calibrated by the K\"ahler form of their target.  We will explain at the end of Section \ref{reals} how the canonical $1$-form on the unit tangent bundle of $(\R P^{n},g_{0})$ gives a calibration-like structure, and how the $1$-energy minimizing property of $Id:(\R P^{n},g_{0}) \rightarrow (\R P^{n},g_{0})$ in Theorem \ref{rpn_thm} can be derived from this construction.  Quaternionic projective space carries a calibration by a parallel $4$-form, described in \cite{Be2,Kr1,Kr2}, and this plays a part in the proof of Theorem \ref{hpn_thm}.  Despite these similarities, however, the fact that the identity mapping of $(\mathbb{H} P^{N},g_{0})$ does not minimize $4$-energy in its homotopy class, contrary to our results for the $1$-energy of mappings of $(\R P^{n},g_{0})$ and the $2$-energy of mappings of $(\C P^{N},g_{0})$, mirrors an important difference between the systolic geometry of the projective planes $\R P^{2}$, $\C P^{2}$ and $\mathbb{H} P^{2}$.  These systolic results have several connections to the results in this paper.  Our results also have some connections to the Blaschke conjecture, cf. \cite{Be1}.  We will finish this introduction by discussing the relationships between these results and giving an outline of the rest of the paper.  

\subsection*{Systolic Geometry and the Blaschke Conjecture:} Pu's inequality, the first published result in systolic geometry, gives a lower bound for the area of a Riemannian metric on the real projective plane in terms of the minimum length of its non-contractible curves: 

\begin{theorem}[Pu's Inequality, \cite{Pu}]
\label{pu_thm}
Let $g$ be a Riemannian metric on $\R P^{2}$.  Let $A(\R P^{2},g)$ be its area and $sys(g)$ its systole, that is, the minimum length of a non-contractible closed curve in $(\R P^{2},g)$.  Then: 

\begin{equation}
\label{pu_eqn}
\displaystyle A(\R P^{2},g) \geq \left( \frac{2}{\pi} \right) sys(g)^{2}.  \medskip 
\end{equation}

Equality holds if and only if $(\R P^{2},g)$ has constant curvature. 
\end{theorem}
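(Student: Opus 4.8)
The plan is to prove Pu's inequality via a conformal normalization and integral-geometric averaging argument. First I would exploit the conformal structure: every Riemannian metric $g$ on $\R P^{2}$ is conformal to the canonical constant-curvature metric $g_{0}$, so I can write $g = e^{2u}g_{0}$ for a smooth function $u$ on $\R P^{2}$. Under this representation, the area element transforms as $dA_{g} = e^{2u}\,dA_{g_{0}}$, while lengths of curves scale by $e^{u}$ along the curve. The systolic constraint $L^{\star}(g)$ is a lower bound on the $g$-length of every non-contractible curve, and in $(\R P^{2},g_{0})$ the non-contractible closed geodesics form a distinguished family, namely the projections of great circles on $S^{2}$.

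The heart of the argument is to average the length constraint over this family of geodesics. Each great-circle image in $\R P^{2}$ is a non-contractible closed curve, so its $g$-length, which equals $\int e^{u}\,ds_{g_{0}}$ along that curve, is at least $L^{\star}(g)$. I would then integrate this inequality over the whole family of such geodesics with respect to the natural invariant measure coming from the isometry group of $(\R P^{2},g_{0})$. By an integral-geometric identity (a Crofton-type formula / Fubini argument on the unit tangent bundle), the average of $\int_{\gamma} e^{u}\,ds_{g_{0}}$ over all these closed geodesics equals a constant multiple of $\int_{\R P^{2}} e^{u}\,dA_{g_{0}}$. This converts the pointwise lower bound $L^{\star}(g)$ on each geodesic into a lower bound on $\int_{\R P^{2}} e^{u}\,dA_{g_{0}}$.

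The final step combines this $L^{1}$-type estimate for $e^{u}$ with the area, which is an $L^{2}$-type quantity $\int e^{2u}\,dA_{g_{0}}$, via the Cauchy–Schwarz inequality:
\begin{equation}
\left( \int_{\R P^{2}} e^{u}\,dA_{g_{0}} \right)^{2} \leq A(\R P^{2},g_{0}) \cdot \int_{\R P^{2}} e^{2u}\,dA_{g_{0}} = A(\R P^{2},g_{0}) \cdot A(\R P^{2},g).
\end{equation}
Substituting the averaged length bound into the left side and tracking the explicit constants (the length of a geodesic and the total area in the canonical metric) yields the sharp coefficient $\frac{2}{\pi}$ in (\ref{pu_eqn}). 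The equality case follows by examining when Cauchy–Schwarz is sharp: equality forces $e^{u}$ to be constant, hence $g$ is homothetic to $g_{0}$ and so has constant curvature.

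I expect the main obstacle to be the integral-geometric averaging step: correctly identifying the invariant measure on the family of non-contractible geodesics and verifying the Crofton-type identity that relates the averaged curve integral to the surface integral of $e^{u}$. The conformal reduction and the Cauchy–Schwarz closing are routine, but the averaging requires care to get the geometric constant exactly right so that the sharp factor $\frac{2}{\pi}$ emerges and the equality analysis is clean.
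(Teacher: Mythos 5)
Your proposal is correct and is essentially the paper's own argument: the paper proves Pu's inequality by writing $g = \varphi\, g_{0}$ via uniformization and applying Croke's energy bound (the $p=2$ case of Theorem \ref{rpn_thm}) to the conformal identity map $(\R P^{2},g_{0}) \rightarrow (\R P^{2},g)$, whose $2$-energy equals $2A(\R P^{2},g)$, and the proof of that energy bound is exactly your Crofton-type averaging over the round closed geodesics followed by the Cauchy--Schwarz/H\"older step. Unwinding the paper's reduction therefore yields precisely your direct argument, including the equality analysis via sharpness of Cauchy--Schwarz forcing the conformal factor to be constant.
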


In Section \ref{energies}, we will explain how Theorem \ref{pu_thm} follows from Croke's proof of the $p=2$ case of Theorem \ref{rpn_thm} in \cite{Cr1}.  We note that Croke's reasoning in the last section of \cite{Cr1} can also be used to show that the canonical metric on $\R P^{n}$ is infinitesimally optimal for an inequality of the form $Vol(\R P^{n},g) \geq G_{n}sys(g)^{n}$ for all $n \geq 3$.  The results of Gromov \cite{Gr3} imply that such an inequality holds with a positive constant $G_{n}$ for all $n \geq 2$, but for $n \geq 3$ the optimal constant $G_{n}$ is not presently known.  Gromov has also proven an inequality for complex projective space which is analogous to Pu's inequality, in terms of an invariant known as the stable $2$-systole.  For a Riemannian metric $g$ on $\C P^{N}$, this can be defined as follows:  let $\mu_{k}(g)$ be the minimum area in $(\C P^{N},g)$ of a $2$-dimensional current representing $k \in H_{2}(\C P^{N};\Z) \cong \Z$.  The stable $2$-systole $stsys_{2}(g)$ of $g$ is:  

\begin{equation}
\label{stable_2_systole}
\displaystyle stsys_{2}(g) = \lim\limits_{k \to \infty} \left( \frac{1}{k} \right) \displaystyle \mu_{k}(g). \bigskip  
\end{equation} 

\begin{theorem}[Gromov's Stable Systolic Inequality for $\C P^{N}$, \cite{Gr2}, see also \cite{Gr1,BKSW}]
\label{gromov_thm} 

Let $g$ be a Riemannian metric on $\C P^{N}$, let $Vol(\C P^{N},g)$ be its volume and $stsys_{2}(g)$ its stable $2$-systole as above.  Then:  

\begin{equation}
\label{gromov_eqn}
\displaystyle Vol(\C P^{N},g) \geq \frac{stsys_{2}(g)^{N}}{N!}. 
\end{equation}
\end{theorem}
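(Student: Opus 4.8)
The plan is to reduce the volume bound to a pointwise estimate for the $N$-th exterior power of a closed $2$-form via Wirtinger's inequality, combined with the Federer duality between the stable norm on $H_2$ and the comass norm on de Rham cohomology. Write $\alpha \in H^2(\C P^N;\Z) \cong \Z$ for the positive generator, normalized so that $\langle \alpha^N, [\C P^N]\rangle = 1$, and let $A \in H_2(\C P^N;\Z) \cong \Z$ be the generator represented by a linearly embedded $\C P^1$, so that $\langle \alpha, A\rangle = 1$. The first observation is that the stable $2$-systole is exactly the stable norm of this generator: since $\mu_k(g)$ is the least mass of a $2$-current representing $kA$, the definition (\ref{stable_2_systole}) gives $A^\star(g) = \lim_{k} \mu_k(g)/k = \|A\|_{\mathrm{st}}$.

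First I would fix any smooth closed $2$-form $\omega$ representing $\alpha$ in de Rham cohomology. At each point $x$, Wirtinger's inequality bounds the $N$-th power by the pointwise comass $\|\omega_x\|$: one has $\frac{1}{N!}\,\omega^N \le \|\omega_x\|^N\, dV_g$ as a comparison of $2N$-forms, the extremal case being the K\"ahler form of comass $1$, which calibrates the volume form. Since $\omega^N$ represents $\alpha^N$, integration over $\C P^N$ gives $\frac{1}{N!} = \frac{1}{N!}\int_{\C P^N}\omega^N \le \big(\sup_x \|\omega_x\|\big)^N \,\mathrm{Vol}(\C P^N,g)$. Taking the infimum over all representatives $\omega$ of $\alpha$ replaces $\sup_x\|\omega_x\|$ by the comass norm $\|\alpha\|_{\mathrm{co}}$ of the class, so that $\mathrm{Vol}(\C P^N,g) \ge \big(N!\,\|\alpha\|_{\mathrm{co}}^N\big)^{-1}$.

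It then remains to identify $\|\alpha\|_{\mathrm{co}}$ with $1/A^\star(g)$. Here I would invoke Federer's duality theorem, which identifies the comass norm on $H^2(\C P^N;\R)$ with the dual of the stable norm on $H_2(\C P^N;\R)$ under the natural pairing, i.e. $\|\alpha\|_{\mathrm{co}} = \sup_{h\neq 0}\langle\alpha,h\rangle/\|h\|_{\mathrm{st}}$. Because $H_2$ is one-dimensional, this supremum is attained on $A$ and equals $\langle\alpha,A\rangle/\|A\|_{\mathrm{st}} = 1/A^\star(g)$, so that $\|\alpha\|_{\mathrm{co}} = 1/A^\star(g)$ exactly. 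Substituting into the previous inequality yields $\mathrm{Vol}(\C P^N,g) \ge A^\star(g)^N/N!$, as claimed.

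The routine ingredients are the Wirtinger inequality and the identification of $A^\star$ as a stable norm; the step I expect to be the crux is the duality $\|\alpha\|_{\mathrm{co}} = 1/A^\star(g)$, since this is where the a priori global, metric-dependent minimal-area data in $A^\star$ is traded for the comass of a cohomology class. The easy direction $\|\alpha\|_{\mathrm{co}} \ge 1/A^\star(g)$ follows from pairing $\alpha$ against a mass-minimizing current; the substantive direction is $\|\alpha\|_{\mathrm{co}} \le 1/A^\star(g)$, which requires producing a representative of $\alpha$ whose pointwise comass is controlled by $1/A^\star(g)$. Constructing such a near-optimal calibrating form directly, rather than citing Federer's duality as a black box, is the main technical obstacle in a self-contained treatment.
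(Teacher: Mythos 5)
The paper does not prove this theorem --- it is quoted as background, with the proof attributed to Gromov \cite{Gr2} (see also \cite{Gr1,BKSW}) --- so there is no internal argument to compare against. Your proof is correct and is precisely the standard argument from those cited sources: identify $A^{\star}(g)$ with the stable norm of the generator of $H_{2}(\C P^{N};\R)$, apply Wirtinger's inequality pointwise to a near-optimal closed $2$-form representing the dual generator $\alpha$, and trade comass for stable norm via Federer's duality, including your correct observation that the substantive direction $\|\alpha\|_{\mathrm{co}} \leq 1/A^{\star}(g)$ is where the real content lies.
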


As in Pu's inequality for $\R P^{2}$, equality holds for the canonical metric $g_{0}$ on $\C P^{N}$ in Theorem \ref{gromov_thm}.  Unlike the rigidity of the equality case in Pu's inequalty, however, equality also holds for all K\"ahler metrics on $\C P^{N}$.  As with the broader characterization of equality in Theorem \ref{cpn_holom_thm} for complex projective space, compared to Theorem \ref{rpn_thm} for real projective space, this follows from the fact that for any K\"ahler metric $\widetilde{g}$ on $\C P^{N}$, complex curves are calibrated and thus area minimizing in their homology classes. \\

A result analogous to Pu's and Gromov's inequalities (\ref{pu_eqn}) and (\ref{gromov_eqn}) holds for the quaternionic projective plane.  Like the result for $\C P^{N}$ in Theorem \ref{gromov_thm}, this inequality is in terms of the stable $4$-systole, which can be defined by a limit for $H_{4}(\mathbb{H}P^{2};\Z) \cong \Z$ as in (\ref{stable_2_systole}).  However unlike the results for $\R P^{2}$ and $\C P^{2}$ in Pu's and Gromov's inequalities, Bangert, Katz, Shnider and Weinberger have shown that the canonical metric on $\mathbb{H} P^{2}$ is not optimal for this inequality:  

\begin{theorem}[\cite{BKSW}]
Let $g$ be a Riemannian metric on $\mathbb{H}P^{2}$ and $stsys_{4}(g)$ the stable $4$-systole of $g$ (as defined in \cite{Gr2,BKSW}).  There is a positive constant $D_{2}$, independent of $g$, such that: 

\begin{equation}
\label{hpn_systolic_eqn}
\displaystyle Vol(\mathbb{H}P^{2},g) \geq D_{2} stsys_{4}(g)^{2}. \medskip  
\end{equation}
The optimal constant in (\ref{hpn_systolic_eqn}) satisfies $\frac{1}{6} \geq D_{2} \geq \frac{1}{14}$, which excludes the value $\frac{3}{10}$ of the canonical metric.  
\end{theorem}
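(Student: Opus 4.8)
The plan is to establish the two bounds on the optimal constant $D_{2}$ by essentially independent arguments. The positive lower bound $D_{2} \geq \frac{1}{14}$ is a Gromov-type stable systolic inequality, valid for every metric, coming from the cup-product structure $\alpha \cup \alpha \neq 0$ for the generator $\alpha$ of $H^{4}(\mathbb{H}P^{2};\Z)$; the upper bound $D_{2} \leq \frac{1}{6}$, which excludes the canonical value $\frac{3}{10}$, requires the explicit construction of a metric whose stable $4$-systole is large relative to its volume.

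For the lower bound I would pass to the cohomological (comass) description of the stable systole. By Federer duality, $B^{\star}(g) = 1/\|\alpha\|^{*}$, where $\|\alpha\|^{*} = \inf \{ \sup_{x} \|\omega_{x}\|_{\mathrm{comass}} : [\omega] = \alpha \}$ is the comass norm of the generator $\alpha \in H^{4}(\mathbb{H}P^{2};\R)$ dual to $[\mathbb{H}P^{1}]$. Fixing a closed $4$-form $\omega$ with $[\omega] = \alpha$ and comass arbitrarily close to $\|\alpha\|^{*}$, the self-intersection $\langle \alpha \cup \alpha, [\mathbb{H}P^{2}] \rangle = 1$ gives $\int_{\mathbb{H}P^{2}} \omega \wedge \omega = 1$. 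Writing $\omega \wedge \omega = f \, dVol_{g}$ and using the pointwise estimate $|f| \leq \|\omega \wedge \omega\|_{\mathrm{comass}} \leq W \, \|\omega\|^{2}_{\mathrm{comass}}$, where $W$ is the wedge-comass constant for $4$-forms on $\R^{8}$, integration yields $1 \leq W \, (\|\alpha\|^{*})^{2} \, Vol(g)$, i.e. $Vol(g) \geq \frac{1}{W} B^{\star}(g)^{2}$. The substance of this direction is then the linear-algebra inequality $W \leq 14$.

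For the upper bound I would build an explicit one-parameter family of metrics adapted to the Blaschke structure of $\mathbb{H}P^{2}$, viewing it as a disk bundle over a cut-locus $\mathbb{H}P^{1}$ with $\R^{4}$ normal fibers, and deform the canonical metric anisotropically in the radial and fiber directions so as to shrink the total volume while preventing $4$-cycles from becoming cheap. Computing $Vol(g)$ directly and bounding $B^{\star}(g)$ from below by exhibiting a closed calibrating $4$-form for which the deformed $\mathbb{H}P^{1}$ is calibrated, I would then optimize over the deformation parameter to force the ratio $Vol(g)/B^{\star}(g)^{2}$ down to at most $\frac{1}{6} < \frac{3}{10}$.

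The main obstacle is twofold, and both parts reflect the same phenomenon that distinguishes $\mathbb{H}P^{2}$ from $\R P^{2}$ and $\C P^{2}$: unlike the complex case, where Wirtinger's inequality is sharp at the K\"ahler form, the wedge-comass inequality for $4$-forms on $\R^{8}$ is not saturated at the canonical quaternionic $4$-form. Determining the sharp value of $W$ is therefore a genuine representation-theoretic optimization over $4$-forms rather than an appeal to a classical calibration identity, and this same failure of saturation is precisely what leaves room to beat the canonical metric. The hardest single step is the lower bound on $B^{\star}(g)$ for the constructed metric: one must rule out all competing integral $4$-cycles, which demands an honest calibration argument (or a comparably robust area comparison) rather than merely evaluating the area of the obvious representative.
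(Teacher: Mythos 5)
First, a point of context: the paper does not prove this statement at all --- it is quoted as background, with the proof residing in \cite{BKSW} --- so your proposal can only be measured against the argument in that reference. For the lower bound $D_{2} \geq \frac{1}{14}$, your outline is structurally exactly the argument of \cite{BKSW}: Federer duality identifies $B^{\star}(g)$ with the reciprocal of the minimal comass of a closed form representing the generator $\alpha \in H^{4}(\mathbb{H}P^{2};\R)$, and then $1 = \int_{\mathbb{H}P^{2}} \omega \wedge \omega \leq W \, \|\omega\|_{\mathrm{comass}}^{2} \, Vol(g)$ reduces everything to the linear-algebra constant $W$ for $4$-forms on $\R^{8}$. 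But you should be aware that the inequality $W \leq 14$ is not a routine wedge-comass estimate: it is the main theorem of \cite{BKSW} (a sharp Wirtinger-type inequality, with equality exactly at the Cayley $4$-form, proved via representation theory), so this half of your proposal is correct in structure but assumes, rather than proves, the deepest ingredient.

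The genuine gap is in the upper bound $D_{2} \leq \frac{1}{6}$. Your plan --- deform the canonical metric anisotropically on the disk bundle over the cut locus $\mathbb{H}P^{1}$ and certify $B^{\star}(g)$ by a closed $4$-form calibrating the deformed $\mathbb{H}P^{1}$ --- cannot reach $\frac{1}{6}$, and the obstruction is precisely the pointwise inequality you invoke in the other direction. If $\eta$ is a closed form of comass $1$ representing $\alpha$ which calibrates a cycle $C$ in the generating class, then $B^{\star}(g) = \mathrm{mass}(C) = \int_{C} \eta = \langle \alpha, [C] \rangle = 1$, while $1 = \int_{\mathbb{H}P^{2}} \eta \wedge \eta \leq \left( \sup_{x} \frac{\eta \wedge \eta}{dVol_{g}} \right) Vol(g)$; hence $B^{\star}(g)^{2}/Vol(g)$ is bounded above by the supremum of the pointwise Wirtinger ratio of the calibrating form itself. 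Any form modeled pointwise on the quaternionic (Kraines) structure --- the only natural choice compatible with a radial/fiber deformation of the canonical metric --- has pointwise ratio $\frac{10}{3}$, so such a construction can never certify a ratio better than the canonical metric's, let alone reach $6$. The constant $6$ is not the output of an optimization over deformation parameters: it is the Wirtinger ratio of the K\"ahler $4$-form, since $\frac{\omega^{2}}{2}$ on $\R^{8} = \C^{4}$ has comass $1$ and $\frac{\omega^{2}}{2} \wedge \frac{\omega^{2}}{2} = \frac{\omega^{4}}{4} = 6 \, dVol$. Accordingly, the construction in \cite{BKSW} is organized around a closed form of pointwise \emph{K\"ahler} type: one fills essentially all of the volume with a region carrying a constant multiple of $\frac{\omega^{2}}{2}$ inside the open cell $\mathbb{H}P^{2} \setminus \mathbb{H}P^{1} \cong \R^{8} = \C^{4}$, and then controls both the form and the metric near $\mathbb{H}P^{1}$ --- where the form cannot vanish, since $\langle \alpha, [\mathbb{H}P^{1}] \rangle = 1$ --- so that this region carries negligible volume without raising the global comass. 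You correctly observe that the failure of the Kraines form to saturate the Wirtinger inequality is what leaves room to beat the canonical metric, but your construction never exploits this: it stays within the quaternionic geometry. A final, smaller point: ruling out all competing integral $4$-cycles is not the hardest step, since it is automatic from the duality $B^{\star}(g) \geq 1/\|\eta\|_{\mathrm{comass}}$ valid for any closed representative $\eta$; the hard step is producing the metric-form pair with the correct (K\"ahler) pointwise model on almost all of the volume.
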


The proof of the characterization of equality for $p=1$ in Theorem \ref{rpn_thm} is based on the characterization of the canonical metric on $\R P^{n}$ as the only Riemannian metric on $\R P^{n}$ for which the first conjugate locus of each point $x_{0}$ consists of a single point (in fact $x_{0}$ itself).  Blaschke conjectured that this was the case and it was proven by the combined work of Berger, Green, Kazdan and Yang, cf. \cite{Gn, Be1}.  The Blaschke conjecture therefore implies that, among mappings which are immersions, the equality case in Theorem \ref{rpn_thm} for $p = 1$ is the same as for $p > 1$.  This is different from the corresponding result for mappings of $(\C P^{N},g_{0})$:  there are holomorphic mappings of $\C P^{2}$ which are not isometries of the canonical metric.  These mappings minimize $2$-energy in their homotopy class, but by Theorem \ref{kahler_thm} they do not minimize $p$-energy for $p > 2$.  Therefore, the equality case for $p=2$ in our results for $\C P^{2}$ is strictly larger than for $p > 2$, even among diffeomorphisms.  We note that there are generalizations of the Blaschke conjecture for for $\C P^{N}$, $\mathbb{H} P^{N}$ and $\mathcal{C}a P^{2}$, described in \cite{Be1}, which are currently open.  

\subsection*{Outline and Notation:} In Section \ref{energies} we will define the $p$-energy of a mapping of Riemannian manifolds and establish some of its basic properties.  In Section \ref{reals} we will prove Theorem \ref{rpn_thm}.  In Section \ref{complexes} we will prove Theorem \ref{cpn_thm}, of which Theorem \ref{kahler_thm} is a special case, and Theorem \ref{cpn_holom_thm}.  In Section \ref{quaternions}, we will prove Theorem \ref{hpn_thm}.  The proof of Theorem \ref{hpn_thm} uses the twistor fibration $\Psi:\C P^{2N+1} \rightarrow \mathbb{H} P^{N}$, and we will discuss some background related to the twistor fibration at the beginning of Section \ref{quaternions}. \\ 

Throughout, we will write $\sigma(k)$ for the volume of the unit sphere in $\R^{k+1}$.  For a Riemannian metric $g$ on a manifold $M$, $U(M,g)$ will denote the unit tangent bundle of $M$ with the metric $g$ and $U_{p}(M,g)$ will denote its fibre at $p \in M$.  

\subsection*{Acknolwedgements:} I am very happy to thank Christopher Croke, Joseph H.G. Fu, Mikhail Katz, Frank Morgan and Michael Usher for their input and feedback about this work. 


\section{Energy Functionals of Mappings} 
\label{energies}


In this section, we will define the $p$-energy of a mapping.  We will prove two elementary results, Lemmas \ref{elementary_lemma} and \ref{energy_formula_lemma}, which we will use in the proof of our main theorems below. \\ 

The energy of a Lipschitz mapping $F:(M^{m},g) \rightarrow (N^{n},h)$ of Riemannian manifolds is:  

\begin{equation}
\label{energy_eqn}
\displaystyle E_{2}(F) = \int\limits_{M} |dF_{x}|^{2} dVol_{g}, \bigskip 
\end{equation}
where $|dF_{x}|$ is the Euclidean norm of $dF:T_{x}M \rightarrow T_{F(x)}N$.  We note that many authors define the energy to be one half the expression in (\ref{energy_eqn}). \\ 

There are many equivalent ways to define the energy of a mapping, discussed by Eells and Sampson in \cite{ES1}.  In this work, they initiated the study of the critical points of the energy as a functional on mappings from $(M,g)$ to $(N,h)$.  These critical mappings are known as harmonic maps and have many important connections to minimal submanifold theory, K\"ahler geometry and several other topics in differential geometry and analysis. \\ 

The energy in (\ref{energy_eqn}) fits naturally into a $1$-parameter family of functionals: 

\begin{definition}
\label{p_energy_def}
Let $F:(M^{m},g) \rightarrow (N^{n},h)$ be a Lipschitz mapping of Riemannian manifolds.  For $p \geq 1$, the $p$-energy of $F$ is: 

\begin{equation}
\label{p-energy_eqn}
\displaystyle \int\limits_{M} |dF_{x}|^{p} dVol_{g}.  
\end{equation}
\end{definition}

The pointwise quantity $|dF_x|^{p}$, where defined, is called the $p$-energy density.  We will denote this $e_{p}(F)_{x}$.  It will be helpful to note that wherever this can be defined, i.e. at all $x \in M$ at which $F$ is differentiable, $F^{*}h$ is a positive semidefinite, symmetric bilinear form on $T_{x}M$ which can be diagonalized relative to $g$.  Letting $e_{1}, e_{2}, \cdots, e_{m}$ be an orthonormal basis for $T_{x}M$ (relative to $g$) of eigenvectors for $F^{*}h$, we then have: 

\begin{equation}
\label{energy_frame_eqn}
\displaystyle |dF_{x}|^{2} = \sum\limits_{i=1}^{m} |dF(e_{i})|^{2}. \bigskip 
\end{equation}

This gives the following elementary lower bound for the $p$-energy of $F:(M^{m},g) \rightarrow (N^{n},h)$ for $p \geq dim(M)$: 

\begin{lemma}
\label{elementary_lemma}
Let $(M^{m},g)$ be a finite volume Riemannian manifold. Let $F:(M^{m},g) \rightarrow (N^{n},h)$ be a Lipschitz mapping, and define $Vol_{g}(M,F^{*}h)$ to be: 

\begin{equation}
\label{pullback_vol_eqn}
\displaystyle \int\limits_{M}|\det(dF_{x})| dVol_{g}, \medskip  
\end{equation}
where $det(dF_{x})$ is $0$ if $rk(dF_{x}) < m$ and is the determinant of $dF:T_{x}M \rightarrow dF(T_{x}) \subseteq T_{F(x)}N$ if $rk(dF_{x}) = m < n$. \\ 

Then for $p \geq m$, 

\begin{equation}
\label{g_dim_energy_eqn}
\displaystyle E_{p}(F) \geq m^{\frac{p}{2}} \frac{Vol_{g}(M,F^{*}h)^{\frac{p}{m}}}{Vol(M,g)^{\frac{p-m}{m}}}. \medskip 
\end{equation}

For $p=m$, equality holds if and only if $dF_{x}$ is a homothety at almost all $x \in M$.  For $p > m$, equality holds if and only if $dF_{x}$ is a homothety, by a constant factor $C_{F}$, at almost all $x \in M$ at which $F$ is differentiable.  
\end{lemma}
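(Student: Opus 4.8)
The plan is to prove the inequality pointwise and then integrate, chaining two classical inequalities. At each point $x \in M$ where $F$ is differentiable, the pullback $F^{*}h$ is diagonalizable relative to $g$ as in (\ref{energy_frame_eqn}); I would write $\lambda_{1}^{2}, \ldots, \lambda_{m}^{2}$ for its eigenvalues, so that the $\lambda_{i} \geq 0$ are the singular values of $dF_{x}$. Then (\ref{energy_frame_eqn}) reads $|dF_{x}|^{2} = \sum_{i=1}^{m} \lambda_{i}^{2}$, while $|\det(dF_{x})| = \prod_{i=1}^{m} \lambda_{i}$, with the convention that this product vanishes exactly when $rk(dF_{x}) < m$, matching the definition in the statement.

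The key pointwise estimate is the arithmetic--geometric mean inequality applied to the nonnegative numbers $\lambda_{1}^{2}, \ldots, \lambda_{m}^{2}$, namely $\sum_{i=1}^{m} \lambda_{i}^{2} \geq m \, ( \prod_{i=1}^{m} \lambda_{i}^{2} )^{1/m}$; raising both sides to the power $\frac{p}{2}$ gives the pointwise bound $e_{p}(F)_{x} \geq m^{\frac{p}{2}} |\det(dF_{x})|^{\frac{p}{m}}$. Integrating over $M$, which is legitimate since $F$, being Lipschitz, is differentiable almost everywhere by Rademacher's theorem, yields $E_{p}(F) \geq m^{\frac{p}{2}} \int_{M} |\det(dF_{x})|^{\frac{p}{m}} \, dVol_{g}$. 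When $p = m$ the exponent $\frac{p}{m}$ equals $1$ and this is already the claimed inequality (\ref{g_dim_energy_eqn}). For $p > m$ I would then apply Jensen's inequality for the strictly convex function $t \mapsto t^{\frac{p}{m}}$ against the probability measure $dVol_{g} / Vol(M,g)$, which converts the integral of $|\det(dF_{x})|^{\frac{p}{m}}$ into a power of $\int_{M} |\det(dF_{x})| \, dVol_{g} = Vol_{g}(M,F^{*}h)$ and introduces the factor $Vol(M,g)^{-\frac{p-m}{m}}$; combining this with the previous step gives (\ref{g_dim_energy_eqn}).

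The equality analysis is where the care is needed, and I expect it to be the main point of the argument rather than the inequality itself. Equality in the arithmetic--geometric mean step holds exactly when $\lambda_{1} = \cdots = \lambda_{m}$, that is, when $dF_{x}$ is a homothety, so for $p = m$ equality in (\ref{g_dim_energy_eqn}) holds precisely when $dF_{x}$ is a homothety at almost every $x$. For $p > m$ the function $t \mapsto t^{\frac{p}{m}}$ is strictly convex, so Jensen's inequality is an equality if and only if $|\det(dF_{x})|$ is almost everywhere constant; together with the homothety condition from the first step this forces the homothety factor to be a single constant $C_{F}$ almost everywhere, which is the stated condition.

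The remaining bookkeeping is to confirm that the degenerate loci are handled consistently with the ``homothety almost everywhere'' conclusion: the measure-zero set where $F$ fails to be differentiable contributes nothing to either integral, and at points where $rk(dF_{x}) < m$ one has $|\det(dF_{x})| = 0$ while the pointwise inequality still holds, so that equality there forces every $\lambda_{i} = 0$, \ie $dF_{x} = 0$, which is the homothety by factor $0$. I do not anticipate any genuine obstacle beyond tracking the two equality conditions and their interaction on these null and rank-deficient sets.
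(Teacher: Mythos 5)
Your proposal is correct and follows essentially the same route as the paper: the pointwise arithmetic--geometric mean bound $e_{p}(F)_{x} \geq m^{\frac{p}{2}}|\det(dF_{x})|^{\frac{p}{m}}$, integration, and then an integral inequality for $p > m$, where your use of Jensen's inequality for $t \mapsto t^{\frac{p}{m}}$ against the normalized volume measure is exactly equivalent to the H\"older step in the paper, with the same equality conditions (a.e.\ homothety from AM--GM, a.e.\ constant $|\det(dF_{x})|$ from strict convexity). Your extra remark handling the rank-deficient locus as a homothety of factor $0$ is a harmless refinement consistent with the paper's statement.
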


\begin{proof}

For $p=m$, Lemma \ref{elementary_lemma} follows from the pointwise inequality $m^{\frac{p}{2}}|\det(dF_{x})|^{\frac{p}{m}} \leq e_{p}(F)_{x}$, which follows from (\ref{energy_frame_eqn}) and the arithmetic-geometric mean inequality for the eigenvalues of $F^{*}h$ relative to $g$.  For $p > m$, Lemma \ref{elementary_lemma} follows from this pointwise inequality together with H\"older's inequality. \end{proof}

Note that in Lemma \ref{elementary_lemma}, equality for any $p > m$ implies equality for all $p \geq m$. \\ 

If $F$ is smooth, the equality condition for $p=m$ in Lemma \ref{elementary_lemma} says that $F$ is a semiconformal mapping, that is $F^{*}h = \varphi g$ for a nonnegative function $\varphi$ on $M$, and the equality condition for $p > m$ says that $F$ is a homothety, i.e. $F^{*}h$ is a rescaling of $g$.  This generalizes the well-known fact that for mappings of surfaces, the energy is pointwise bounded below by the area of the image, with equality precisely where the mapping is conformal.  The uniformization theorem implies that every Riemannian metric $g$ on $\R P^{2}$ is conformally equivalent to a constant curvature metric $g_{0}$ which is unique up to scale.  Lemma \ref{elementary_lemma} implies that for a metric $g = \varphi g_{0}$ conformal to a constant curvature metric $g_{0}$, the identity mapping from $(\R P^{2},g_{0})$ to $(\R P^{2},g)$ minimizes energy in its homotopy class and has energy equal to $2A(\R P^{2},g)$ (with the energy defined with our normalization in (\ref{energy_eqn})).  Pu's Theorem \ref{pu_thm} is then a special case of Croke's lower bound for the energy of mappings $F:(\R P^{n},g_{0}) \rightarrow (M,g)$. \\  

The following formula for the energy density is used in Croke's results in \cite{Cr1} and will also be used throughout the proofs of our results below:  

\begin{lemma}[See \cite{Cr1}]
Let $F:(M^{m},g) \rightarrow (N^{n},h)$ be a Lipschitz mapping of Riemannian manifolds and $x \in M$ a point at which $F$ is differentiable.  Then:   

\begin{equation}
\label{norm_squared_formula}
\displaystyle |dF_x|^{2} = \frac{m}{\sigma(m-1)} \int\limits_{U_{x}(M,g)} |dF(\vec{u})|^{2} d\vec{u}. 
\end{equation}
\end{lemma}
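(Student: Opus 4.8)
The plan is to reduce both sides to the eigenvalues of $F^{*}h$ relative to $g$ and then carry out an elementary averaging computation over the unit sphere $U_{x}M \cong S^{m-1}$. First I would fix an orthonormal basis $e_{1}, \dots, e_{m}$ of $T_{x}M$ consisting of eigenvectors for $F^{*}h$, as in (\ref{energy_frame_eqn}), and write $\lambda_{i} = |dF(e_{i})|^{2}$ for the corresponding eigenvalues, so that the left-hand side of (\ref{norm_squared_formula}) is simply $|dF_{x}|^{2} = \sum_{i=1}^{m} \lambda_{i}$ by (\ref{energy_frame_eqn}).

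Next I would express the integrand on the right in these coordinates. Writing a unit vector $\vec{u} \in U_{x}M$ as $\vec{u} = \sum_{i} u_{i} e_{i}$ with $\sum_{i} u_{i}^{2} = 1$, the fact that the chosen basis diagonalizes $F^{*}h$ gives $|dF(\vec{u})|^{2} = (F^{*}h)(\vec{u},\vec{u}) = \sum_{i} \lambda_{i} u_{i}^{2}$, a quadratic form in the coordinates $u_{i}$ with no cross terms. Substituting this and exchanging the finite sum with the integral, the right-hand integral becomes $\sum_{i} \lambda_{i} \int_{U_{x}M} u_{i}^{2}\, d\vec{u}$.

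The key computation is then the value of the second moment $\int_{U_{x}M} u_{i}^{2}\, d\vec{u}$. Rather than evaluate it directly, I would note that it is independent of $i$ by the symmetry of the round sphere, and that summing over $i$ while using $\sum_{i} u_{i}^{2} = 1$ gives $\sum_{i} \int_{U_{x}M} u_{i}^{2}\, d\vec{u} = \int_{U_{x}M} 1\, d\vec{u} = \sigma(m-1)$. Hence each moment equals $\sigma(m-1)/m$. Combining these steps yields $\int_{U_{x}M} |dF(\vec{u})|^{2}\, d\vec{u} = \frac{\sigma(m-1)}{m} \sum_{i} \lambda_{i} = \frac{\sigma(m-1)}{m} |dF_{x}|^{2}$, which rearranges to (\ref{norm_squared_formula}).

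There is no serious obstacle here: this is a pointwise identity at a single $x$, and everything reduces to finite-dimensional linear algebra together with one spherical average. The only point requiring a little care is the justification of the symmetry argument for the second moment. One should observe that $d\vec{u}$ is the round measure on $U_{x}M$ induced by $g$, which is invariant under the orthogonal transformations of $T_{x}M$ permuting the $e_{i}$ and reversing their signs, so that the integrals $\int u_{i}^{2}\, d\vec{u}$ genuinely coincide for all $i$. Once this is noted the identity follows without evaluating any Gamma-function integrals explicitly.
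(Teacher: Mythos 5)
Your proof is correct. The paper states this lemma without proof, deferring to Croke \cite{Cr1}, and your argument --- diagonalizing $F^{*}h$, writing $|dF(\vec{u})|^{2} = \sum_{i}\lambda_{i}u_{i}^{2}$, and evaluating the second moment $\int_{U_{x}M}u_{i}^{2}\,d\vec{u} = \sigma(m-1)/m$ by the symmetry of the round measure together with the trace identity $\sum_{i}u_{i}^{2}=1$ --- is exactly the standard spherical-averaging computation underlying that citation, with the symmetry step properly justified.
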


The identity in (\ref{norm_squared_formula}) is the basis for the following formula for the energy of a mapping $F:(\C P^{N},g_{0}) \rightarrow (M,g)$, where $g_{0}$ is a K\"ahler metric on $\C P^{N}$.  This result is an elementary example of the type of arguments and calculations we will employ below and is also an important lemma in several of them:  

\begin{lemma} 
\label{energy_formula_lemma}
Let $g_{0}$ be the canonical metric on $\C P^{N}$, normalized so that its sectional curvature $K$ satisfies $1 \leq K \leq 4$.  Let $\mathcal{L}(\C P^{N})$ be the family of linearly embedded $1$-dimensional complex projective subspaces $\C P^{1} \subseteq \C P^{N}$, let $U(\C P^{N},g_{0})$ be the unit tangent bundle of $(\C P^{N},g_{0})$ and let $\mathsf{T}:U(\C P^{N},g_{0}) \rightarrow \mathcal{L}(\C P^{N})$ be the mapping which sends a tangent vector $\vec{u} \in U(\C P^{N},g_{0})$ to the unique element of $\mathcal{L}(\C P^{N})$ to which $\vec{u}$ is tangent.  Let $d\widetilde{\mathcal{M}}$ be the measure on $\mathcal{L}(\C P^{N})$ which is pushed forward from the Riemannian volume on $U(\C P^{N},g_{0})$ via $\mathsf{T}$, normalized so that, if $d\mathcal{P}$ is the fibrewise volume form of the fibres of $\mathsf{T}$, $\mathsf{T}^{*}d\widetilde{\mathcal{M}} \wedge d\mathcal{P} = dVol_{U(\C P^{N},g_{0})}$.   \\ 

Let $F:(\C P^{N},g_{0}) \rightarrow (M,g)$ be a Lipschitz mapping to a Riemannian manifold $(M,g)$.  Letting $E_{2}(F|_{\mathcal{P}})$ be the energy of the mapping $F$ restricted to $\mathcal{P} \in \mathcal{L}(\C P^{N})$, we have: 

\begin{equation}
\displaystyle E_{2}(F) = \frac{2\pi N}{\sigma(2N-1)} \int\limits_{\mathcal{L}(\C P^{N})} E_{2}(F|_{\mathcal{P}}) d\widetilde{\mathcal{M}}.  
\end{equation}
\end{lemma}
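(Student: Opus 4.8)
The plan is to apply the pointwise identity (\ref{norm_squared_formula}) at two different scales and to connect them through the fibration $\mathsf{T}$. Since $\C P^{N}$ has real dimension $2N$, integrating (\ref{norm_squared_formula}) over $(\C P^{N},\widetilde{g})$ gives
\begin{equation*}
E_{2}(F) = \int_{\C P^{N}} |dF_{x}|^{2}\, dVol_{\widetilde{g}} = \frac{2N}{\sigma(2N-1)} \int_{U(\C P^{N},\widetilde{g})} |dF(\vec{u})|^{2}\, dVol_{U},
\end{equation*}
where $dVol_{U}$ is the Riemannian volume on the unit tangent bundle, namely the product of the round measure on each fiber $U_{x}\C P^{N}$ with $dVol_{\widetilde{g}}$. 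Applying the same identity to the Lipschitz map $F|_{\mathcal{P}}:(\mathcal{P},\widetilde{g}) \rightarrow (M,g)$ of the real two-dimensional submanifold $\mathcal{P}$, and using $\sigma(1) = 2\pi$, gives
\begin{equation*}
E_{2}(F|_{\mathcal{P}}) = \frac{2}{\sigma(1)} \int_{U\mathcal{P}} |dF(\vec{u})|^{2}\, dVol_{U\mathcal{P}} = \frac{1}{\pi} \int_{U\mathcal{P}} |dF(\vec{u})|^{2}\, dVol_{U\mathcal{P}},
\end{equation*}
where $U\mathcal{P}$ is the unit tangent bundle of $(\mathcal{P},\widetilde{g})$ and, for $\vec{u}$ tangent to $\mathcal{P}$, $|dF(\vec{u})|$ agrees with $|d(F|_{\mathcal{P}})(\vec{u})|$.

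The heart of the argument is the identification of these two computations through $\mathsf{T}$. First I would check that $\mathsf{T}$ realizes $U(\C P^{N},\widetilde{g})$ as a fiber bundle over $\mathcal{L}(\C P^{N})$: the complex structure $J$ sends a unit vector $\vec{u}$ to the complex line $\mathrm{span}_{\R}(\vec{u},J\vec{u})$, which determines the unique linearly embedded $\C P^{1}$ tangent to $\vec{u}$, and the fiber $\mathsf{T}^{-1}(\mathcal{P})$ is exactly the set of unit vectors tangent to $\mathcal{P}$, i.e. the unit tangent bundle $U\mathcal{P}$. I would then disintegrate $dVol_{U}$ along $\mathsf{T}$, so that by the definition of $d\widetilde{\mathcal{M}}$ as the measure induced on $\mathcal{L}(\C P^{N})$ from $dVol_{U}$ via $\mathsf{T}$,
\begin{equation*}
\int_{U(\C P^{N},\widetilde{g})} |dF(\vec{u})|^{2}\, dVol_{U} = \int_{\mathcal{L}(\C P^{N})} \left( \int_{U\mathcal{P}} |dF(\vec{u})|^{2}\, dVol_{U\mathcal{P}} \right) d\widetilde{\mathcal{M}}.
\end{equation*}
Substituting the two displays above into this identity yields
\begin{equation*}
E_{2}(F) = \frac{2N}{\sigma(2N-1)} \int_{\mathcal{L}(\C P^{N})} \pi\, E_{2}(F|_{\mathcal{P}})\, d\widetilde{\mathcal{M}} = \frac{2\pi N}{\sigma(2N-1)} \int_{\mathcal{L}(\C P^{N})} E_{2}(F|_{\mathcal{P}})\, d\widetilde{\mathcal{M}},
\end{equation*}
which is the claim.

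The main obstacle is the measure-theoretic step. I must justify that disintegrating $dVol_{U}$ along $\mathsf{T}$ produces precisely the fiberwise Riemannian volumes $dVol_{U\mathcal{P}}$, up to a factor absorbed into $d\widetilde{\mathcal{M}}$, rather than some other family of conditional measures; concretely, this requires the Riemannian volume form of $U(\C P^{N},\widetilde{g})$ to factor, locally over $\mathcal{L}(\C P^{N})$, as the fiber volume of $U\mathcal{P}$ wedged with a horizontal form descending to the base. Here the K\"ahler hypothesis enters in an essential way. Because each $\mathcal{P}$ is a complex curve for the standard complex structure and $\widetilde{g}$ is K\"ahler and cohomologous to $g_{0}$, the area of every $\mathcal{P}$ equals $\int_{\mathcal{P}} \omega_{\widetilde{g}}$, where $\omega_{\widetilde{g}}$ is the K\"ahler form; this depends only on the common homology class of the linearly embedded $\C P^{1}$ and on the fixed cohomology class $[\omega_{\widetilde{g}}] = [\omega_{g_{0}}]$, so all the fibers $U\mathcal{P}$ have the same Riemannian volume and the decomposition of $dVol_{U}$ is uniform across $\mathcal{L}(\C P^{N})$. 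Verifying this uniformity carefully, and fixing the normalization of $d\widetilde{\mathcal{M}}$ so that the factor $\pi$ arising from $\sigma(1) = 2\pi$ appears exactly as in the stated constant, is the step I expect to require the most care.
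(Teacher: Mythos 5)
Your proposal follows the paper's proof step for step: apply (\ref{norm_squared_formula}) on $\C P^{N}$, decompose the integral over $U(\C P^{N},\widetilde{g})$ into integrals over the fibers $U\mathcal{P}=\mathsf{T}^{-1}(\mathcal{P})$, and apply (\ref{norm_squared_formula}) again on each $\mathcal{P}$, with $\sigma(1)=2\pi$ producing the factor $\pi$; the constants come out exactly as in the paper. You have also correctly located the only step that is not routine: the claim that $dVol_{U}$ disintegrates over $\mathsf{T}$ with the fiberwise Riemannian volumes $dVol_{U\mathcal{P}}$ as conditional measures (up to the common normalization $Vol(U\mathcal{P})=2\pi^{2}$, which is also what reconciles the ``pushforward'' wording with the stated total mass $Vol(U(\C P^{N},\widetilde{g}))/Vol(U(\C P^{1},\widetilde{g}))$). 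The paper's own proof invokes this step simply as ``Fubini's theorem'' and gives no further justification.

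The justification you offer for that step, however, is a non sequitur, so the proposal has a genuine gap exactly where you predicted care would be needed. The cohomological argument shows only that every fiber has the same total volume: $Area(\mathcal{P},\widetilde{g})=\int_{\mathcal{P}}\omega_{\widetilde{g}}=\pi$, hence $Vol(U\mathcal{P})=2\pi^{2}$ for all $\mathcal{P}$. Equality of total fiber volumes does not imply that the conditional measures are the fiberwise Riemannian volumes: on $[0,1]^{2}$ with the measure $\rho(x,y)\,dx\,dy$, where $\int_{0}^{1}\rho(x,y)\,dy\equiv 1$, every vertical fiber has mass one, yet the conditionals $\rho(x,y)\,dy$ are uniform only if $\rho$ does not depend on $y$. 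What the disintegration actually requires is that the density of $dVol_{U}$ relative to $dVol_{U\mathcal{P}}\times(\hbox{base})$ -- equivalently, the normal Jacobian of $\mathsf{T}$ -- be \emph{constant along each fiber}, which is a pointwise statement, strictly stronger than equality of fiber volumes. For the canonical metric $g_{0}$ this constancy is true, but the reason is homogeneity, not cohomology: $PU(N+1)$ acts transitively on the incidence manifold $\{(x,\mathcal{P}):x\in\mathcal{P}\}$ preserving $dVol_{U}$ and the fibration, so the conditionals are the unique invariant probability measures on the fibers, namely the normalized Riemannian ones (this parallels the $\R P^{n}$ case, where invariance of Liouville measure under the geodesic flow is what legitimizes Croke's Fubini step). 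For a general K\"ahler $\widetilde{g}$ neither mechanism is available: the fibers $U\mathcal{P}$ are not orbits of isometries, nor of any $dVol_{U}$-preserving flow, since for $\widetilde{g}\neq g_{0}$ the lines $\mathcal{P}$ are minimal but not totally geodesic. Concretely, a computation in an affine chart shows the fiberwise density is proportional to $(\det\widetilde{g})^{2}/\widetilde{g}(v,v)^{N+1}$, where $(\det\widetilde{g})$ is the determinant of $(\widetilde{g}_{i\bar{j}})$ and $v$ is the constant coordinate vector tangent to the line; its constancy along every line is a strong pointwise condition on $\widetilde{g}$ that the K\"ahler hypothesis does not supply (a first-variation computation around $g_{0}$ indicates it can fail for K\"ahler perturbations). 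So your outline, completed by the homogeneity argument, proves the lemma for $g_{0}$; for general $\widetilde{g}$ the key measure-theoretic step remains unproved in your proposal -- and, it should be said, it is asserted rather than proved in the paper as well.
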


\begin{proof}

By (\ref{norm_squared_formula}), we have: 

\begin{equation*}
\displaystyle E_{2}(F) = \int\limits_{\C P^{N}} |dF_{x}|^{2} dVol_{g_{0}} = \frac{2N}{\sigma(2N-1)} \int\limits_{U(\C P^{N},g_{0})} |dF(\vec{u})|^{2} d\vec{u} \bigskip 
\end{equation*}
\begin{equation*}
\displaystyle = \frac{2N}{\sigma(2N-1)} \int\limits_{\mathcal{L}(\C P^{N})} \int\limits_{U(\mathcal{P},g_{0})} |dF(\vec{u})|^{2} \ d\vec{u} \ d\widetilde{\mathcal{M}}, \bigskip 
\end{equation*}
where $U(\mathcal{P},g_{0})$ is the unit tangent bundle of a complex projective line $\mathcal{P} \cong \C P^{1}$ in the metric $g_{0}|_{\mathcal{P}}$.  Using (\ref{norm_squared_formula}) again, 

\begin{equation*}
\displaystyle \frac{2N}{\sigma(2N-1)} \int\limits_{\mathcal{L}(\C P^{N})} \int\limits_{U(\mathcal{P},g_{0})} |dF(\vec{u})|^{2} \ d\vec{u} \ d\widetilde{\mathcal{M}} = \frac{2\pi N}{\sigma(2N-1)} \int\limits_{\mathcal{L}(\C P^{N})} \int\limits_{\mathcal{P}} |dF|_{\mathcal{P}_{x}}|^{2} \ dx \ d\widetilde{\mathcal{M}} \bigskip 
\end{equation*}
\begin{equation*}
\displaystyle = \frac{2\pi N}{\sigma(2N-1)} \int\limits_{\mathcal{L}(\C P^{N})} E_{2}(F|_{\mathcal{P}}) d\widetilde{\mathcal{M}}.  
\end{equation*}
\end{proof} 

One can derive similar formulas for the energy of mappings of $(\R P^{n},g_{0})$, $(\mathbb{H} P^{N},g_{0})$ and the Cayley plane $(\mathcal{C}a P^{2},g_{0})$ as integrals over the spaces of linearly embedded $\R P^{1}$, $\mathbb{H} P^{1}$ and $\mathcal{C}a P^{1} \cong S^{8}$, and over the space of geodesics in the the sphere $(S^{n},g_{0})$.  The formula of this type for the energy of mappings $F:(\R P^{n},g_{0}) \rightarrow (M,g)$ plays a key part in Croke's proof of the $p=2$ case of Theorem \ref{rpn_thm} in \cite{Cr1}.  Even in the cases $(\mathbb{H}P^{N},g_{0})$, $(\mathcal{C}a P^{2},g_{0})$ and $(S^{n},g_{0}),n \geq 3$, where one knows that the identity mapping is homotopic to maps with arbitrarily small energy, one can use such a formula to show that a family of mappings whose energies decay to $0$ must also have energies decaying to $0$ when restricted to almost all linear subpsaces $\mathbb{H}P^{d} \subseteq \mathbb{H}P^{N}$, $\mathcal{C}a P^{1} \subseteq \mathcal{C}a P^{2}$, and almost all totally geodesic subspheres $S^{d} \subseteq S^{n}$. \\  

We end this section with a few comments about the regularity of harmonic and energy minimizing maps: \\  

Continuous, weakly harmonic maps are smooth \cite{EL2}.  In particular, any continuous map which minimizes energy in its homotopy class is smooth.  Assuming only Lipschitz regularity, mappings which realize equality for $p=2$ in  Theorems \ref{rpn_thm}, \ref{kahler_thm}, \ref{cpn_holom_thm} and \ref{cpn_thm} are therefore $C^{\infty}$.  However, unless one has established that equality holds for $p=2$, one cannot assume smoothness because for $p \neq 2$ there are $p$-energy minimizing maps which are $C^{1,\alpha}$ for $\alpha < 1$ but are not $C^{2}$.  In dimensions $3$ and greater, $p$-energy minimizing maps also need not be continuous \cite[Section 3]{EL2}.  White has shown \cite{Wh3} that $p$-energy minimizing sequences of maps of compact Riemannian manifolds converge, in an appropriate topology, to mappings which belong to a Sobolev space of mappings and have well-defined homotopy classes when restricted to lower-dimensional skeleta of their domain.  However a $p$-energy minimizing sequence of maps in one homotopy class can converge, in a weak sense, to a map in another homotopy class -- for example, on $(S^{n},g_{0})$, a family of conformal dilations with energy decaying to $0$, in the homotopy class of the identity, converges weakly to a constant map.  In light of these results, Lipschitz regularity is a stronger assumption than is natural for $p$-energy minimizing maps in general.  However, the Lipschitz condition works well in our setting because it is inherited by the restriction of maps $F:(M,g) \rightarrow (N,h)$ to any submanifold of $M$.  In our case, this implies that a Lipshitz mapping $F:(\mathbb{K}P^{N},g_{0}) \rightarrow (M,g)$ will be Lipshitz when restricted to all $\mathbb{K}P^{1} \subseteq \mathbb{K}P^{N}$, where $\mathbb{K}$ is $\R$, $\C$ or $\mathbb{H}$.  For $\mathbb{H} P^{N}$, we will also consider the restriction of $F$ to a family of totally geodesic submanifolds of $(\mathbb{H} P^{N},g_{0})$ isometric to $(\C P^{2},g_{0})$.  This will allow us to draw conclusions about the mapping of $\mathbb{K}P^{N}$ from its behavior along lower-dimensional subspaces.   


\section{Real Projective Space}  
\label{reals} 


In this section, we will prove Theorem \ref{rpn_thm}.  We define the space of oriented geodesics in $(\R P^{n},g_{0})$ to be the quotient of the unit tangent bundle $U(\R P^{n}, g_{0})$ by the geodesic flow.  We denote this $\mathcal{G}(\R P^{n})$ and we equip $\mathcal{G}(\R P^{n})$ with the measure $d\gamma$ pushed forward from the measure on $U(\R P^{n}, g_{0})$.  Because $Vol(U(\R P^{n}, g_{0})) = \frac{\sigma(n)\sigma(n-1)}{2}$, we have:   

\begin{equation}
\label{Vol_G_equation}
\displaystyle Vol(\mathcal{G}(\R P^{n})) = \frac{Vol(U(\R P^{n}, g_{0}))}{\pi}  = \frac{\sigma(n)\sigma(n-1)}{2\pi}. \bigskip 
\end{equation}

\begin{proof}[Proof of Theorem \ref{rpn_thm}]

By the formula (\ref{norm_squared_formula}) for the energy density and the Cauchy-Schwarz inequality, 

\begin{equation*}
\displaystyle E_{p}(F) = \int\limits_{\R P^{n}} |dF_{x}|^{p} dVol_{g_{0}} = \int\limits_{\R P^{n}} \left( \frac{n}{\sigma(n-1)} \int\limits_{U_{x}(\R P^{n},g_{0})} |dF(\vec{u})|^{2} d\vec{u} \right)^{(\frac{p}{2})} dVol_{g_{0}} \bigskip 
\end{equation*}
\begin{equation}
\label{rpn_pf_eqn_1}
\displaystyle \geq \frac{n^{\frac{p}{2}}}{\sigma(n-1)^{p}} \int\limits_{\R P^{n}} \left( \int\limits_{U_{x}(\R P^{n},g_{0})} |dF(\vec{u})| d\vec{u} \right)^{p} dVol_{g_{0}}. \bigskip 
\end{equation}

For $p = 1$, this says: 

\begin{equation}
\label{rpn_pf_eqn_2}
\displaystyle E_{1}(F) \geq \frac{\sqrt{n}}{\sigma(n-1)} \int\limits_{U(\R P^{n},g_{0})} |dF(\vec{u})| d\vec{u}. \bigskip 
\end{equation}

For $p > 1$, by (\ref{rpn_pf_eqn_1}) and H\"older's inequality, 

\begin{equation}
\label{rpn_pf_eqn_3}
\displaystyle E_{p}(F) \geq \frac{2^{p-1} n^{\frac{p}{2}}}{\sigma(n-1)^{p} \sigma(n)^{p-1}} \left( \int\limits_{U(\R P^{n},g_{0})} |dF(\vec{u})| d\vec{u} \right)^{p}. \bigskip 
\end{equation}

For each $\gamma \in \mathcal{G}(\R P^{n})$, $F \circ \gamma$ is an oriented Lipschitz $1$-cycle in $(M,g)$.  Letting $|F \circ \gamma|$ be its mass, and writing $\gamma: [0,\pi] \rightarrow \R P^{n}$ for a unit-speed parametrization of $\gamma$ and $F \circ \gamma: [0,\pi] \rightarrow M$ for the associated parametrization of $F \circ \gamma$, we then have:  

\begin{equation*}
\displaystyle |F \circ \gamma| = \int\limits_{0}^{\pi} |(F \circ \gamma)'(t)| dt, \bigskip 
\end{equation*}
where we have used that the Lipschitz mapping $F \circ \gamma: [0,\pi] \rightarrow M$ is differentiable almost everywhere.  For all $t$ such that $F$ is differentiable at $\gamma(t)$, $(F \circ \gamma)'(t) = dF(\gamma'(t))$.  By Fubini's theorem, the right-hand sides of (\ref{rpn_pf_eqn_2}) and (\ref{rpn_pf_eqn_3}) can therefore be rewritten in terms of integrals over $\mathcal{G}(\R P^{n})$, which implies: 

\begin{equation}
\displaystyle E_{p}(F) \geq \frac{2^{p-1} n^{\frac{p}{2}}}{\sigma(n-1)^{p} \sigma(n)^{p-1}} \left( \int\limits_{\mathcal{G}(\R P^{n})} |F \circ \gamma| d\gamma \right)^{p}. \bigskip 
\end{equation}

Because each geodesic $\gamma$ represents a generator of $\pi_{1}(\R P^{n})$, $|F \circ \gamma| \geq L^{\star}$, and therefore, 

\begin{equation*}
\displaystyle E_{p}(F) \geq \frac{2^{p-1} n^{\frac{p}{2}}}{\sigma(n-1)^{p} \sigma(n)^{p-1}} \left( Vol(\mathcal{G}(\R P^{n})) L^{\star} \right)^{p}, \bigskip  
\end{equation*}
which is (\ref{rpn_thm_eqn}). \\  

Suppose equality holds for $p=1$. \\ 

This implies that equality holds in the Cauchy-Schwarz inequality in (\ref{rpn_pf_eqn_1}) for a.e. $x \in \R P^{n}$.  For all $x$ at which $F$ is differentiable and for which this equality holds, $|dF_{x}(\vec{u})|$ depends only on $x$.  This implies that $F^{*}g$ is a.e. equal to $\varphi(x) g_{0}$, where $\varphi(x)$ is a nonnegative function on $\R P^{n}$.  Because all $\gamma \in \mathcal{G}(\R P^{n})$ map to rectifiable currents $F \circ \gamma$ with well-defined lengths in $(M,g)$, equality also implies that for almost all $\gamma$, $|F \circ \gamma| = L^{\star}$.  Because $|F \circ \gamma| \geq L^{\star}$ and $|F \circ \gamma|$ is lower semicontinuous on $\mathcal{G}(\R P^{n})$, we in fact have $|F \circ \gamma| = L^{\star}$ for all $\gamma$.  The image via $F$ of each geodesic $\gamma$ is therefore a closed geodesic in $(M,g)$, of minimal length $L^{\star}$ in its free homotopy class, although a priori $F \circ \gamma$ may not be parametrized by arc length. \\ 

If $F$ is a smooth immersion, then because each geodesic $\gamma$ in $(\R P^{n},g_{0})$ maps to a closed geodesic in $M$, the image of $F$ is a totally geodesic submanifold, and because $F \circ \gamma$ is of minimal length in its free homotopy class in $(M,g)$, $F^{*}g$ is a Blaschke metric on $\R P^{n}$, cf. Remark \ref{blaschke_remark} below.  By the Berger-Green-Kazdan-Yang proof of the Blaschke conjecture \cite{Gn, Be1}, $F^{*}g$ is therefore isometric to a round metric.  This does not yet imply that $F$ is an isometry or a homothety.  However, letting $\psi: (\R P^{n}, F^{*}g) \rightarrow (\R P^{n}, g_{0})$ be an isometry (or homothety), we then have $\psi^{*}g_{0} = F^{*}g = \varphi(x) g_{0}$, where $\varphi(x)$ is the semiconformal factor as above and is in fact a conformal factor, i.e. is everywhere-defined and positive, because $F$ is an immersion.  By the classification of conformal diffeomorphisms of the round sphere $(S^{n}, g_{0})$, the mapping $\psi$ is therefore an isometry of $g_{0}$, up to rescaling, and $F$ is an isometry or homothety onto its image. \\

Now suppose $p > 1$ and equality holds for $p$. \\ 

Supposing only that $F$ is Lipschitz, this implies all of the conditions which follow for Lipshitz mappings which realize equality for $p=1$ and also implies equality in H\"older's inequality in (\ref{rpn_pf_eqn_3}).  Equality in H\"older's inequality implies that the semiconformal factor $\varphi(x)$ is a.e. equal to a constant $C_{F}$. This implies that equality holds for all $p \geq 1$.  Because equality holds for $p = 2$, $F$ is smooth, so $\varphi(x)$ is an everywhere-defined constant function and $F$ is a homothety onto its image.  Because all $\gamma \in \mathcal{G}(\R P^{n},g_{0})$ map to geodesics of $(M,g)$, the image of $F$ is a totally geodesic submanifold. \end{proof}

\begin{remark}
\label{blaschke_remark}
To see that the conditions for equality when $p=1$ and $F$ is an immersion imply that $F^{*}g$ is a Blaschke metric, i.e. that the first conjugate locus of each point $x_{0}$ in $(\R P^{N},F^{*}g)$ is a single point, in fact $x_{0}$, note that each unit-speed geodesic $c:[0,L^{\star}] \rightarrow (\R P^{N},F^{*}g)$ has a conjugate point at $c(L^{\star}) = c(0)$, where all geodesics based at $c(0)$ intersect, and that this must be the first conjugate point to $c(0)$ along $c$ because $c([0,L^{\star}])$ is length minimizing in its homotopy class. 
\end{remark}

\begin{corollary} 
\label{rpn_cor}
The identity mapping of $(\R P^{n},g_{0})$, $n \geq 2$, minimizes $p$-energy in its homotopy class for all $p \geq 1$.  Up to isometries, the identity mapping is the unique $p$-energy minimizing map in its homotopy class for $p > 1$ and the unique $p$-energy minimizing immersion in its homotopy class for $p=1$. 
\end{corollary}
  
Note that the characterization of equality for $p = 1$ in Theorem \ref{rpn_thm} and Corollary \ref{rpn_cor} is false without the stipulation that $n \geq 2$:  any diffeomorphism of $\R P^{1} = \R / \pi\Z$ has $1$-energy equal to $\pi$. \\ 

We note that for $n \geq 2$, non-isometric projective linear transformations of $\R P^{n}$ have the length-preserving property for $\gamma \in \mathcal{G}(\R P^{n})$ which is implied by equality for $p=1$ in Theorem \ref{rpn_thm} but do not minimize $1$-energy in their homotopy class.  In complex projective space, however, non-isometric projective complex linear transformations are holomorphic and minimize $2$-energy in the homotopy class of the identity. \\  

We end this section by re-interpreting our proof that the identity mapping of $(\R P^{n},g_{0})$ minimizes $1$-energy in its homotopy class in terms of a calibration-like property of the canonical $1$-form on the unit tangent bundle of $(\R P^{n},g_{0})$: \\  

For any oriented integral $1$-chain $\sum_{i} a_{i}\tau_{i}$ in $\R P^{n}$, where $a_{i} \in \Z$ and $\tau_{i}: \Delta^{1} \rightarrow \R P^{n}$ are oriented Lipschitz $1$-simplices, one can define a $1$-current in the unit tangent bundle $U(\R P^{n},g_{0})$ by associating to each point $\tau_{i}(t)$ at which $\tau_{i}'(t) \neq 0$ the unit vector in $T_{\tau_{i}(t)}\R P^{N}$ tangent to $\tau_{i}$ in the oriented direction, with multiplicity $a_{i}$.  Letting $\alpha$ be the canonical $1$-form on the unit tangent bundle of $(\R P^{n},g_{0})$, when all $a_{i}$ are non-negative, the mass of this current is its pairing with $\alpha$.  The current of this type associated to any non-contractible closed curve in $\R P^{n}$ has mass greater than or equal to $\pi$, and any Lipshitz mapping $F:(\R P^{n},g_{0}) \rightarrow (\R P^{n},g_{0})$ homotopic to the identity sends each $\gamma \in \mathcal{G}(\R P^{n})$ to such a closed curve.  This implies the lower bound for the $1$-energy of mappings homotopic to $Id:(\R P^{n},g_{0}) \rightarrow (\R P^{n}, g_{0})$ in Theorem \ref{rpn_thm}.  Unlike a calibration, however, $\alpha$ is not closed:  $d\alpha$ is the Liouville symplectic form on the tangent bundle. 


\section{Complex Projective Space}
\label{complexes}


In this section, we will prove lower bounds for the $p$-energy of mappings from $\C P^{N}$ to Riemannian manifolds, similar to the estimates for mappings of real projective space in Theorem \ref{rpn_thm}.  The equality case in these results is much broader than in Theorem \ref{rpn_thm}, but for mappings of $\C P^{2}$ it implies Theorem \ref{kahler_thm}.  For mappings from $\C P^{N}$ to simply connected, compact K\"ahler manifolds, we will prove the stronger characterization of equality in Theorem \ref{cpn_holom_thm}. \\ 

As in Lemma \ref{energy_formula_lemma}, we will let $\mathcal{L}(\C P^{N})$ be the space of linearly embedded $\C P^{1} \subseteq \C P^{N}$ and $d\widetilde{\mathcal{M}}$ the measure on $\mathcal{L}(\C P^{N})$ pushed forward from the measure on the unit tangent bundle $U(\C P^{N}, g_{0})$ via the mapping $\mathsf{T}:U(\C P^{N}, g_{0}) \rightarrow \mathcal{L}(\C P^{N})$.  The total volume of $\mathcal{L}(\C P^{N})$ via the measure $d\widetilde{\mathcal{M}}$ is then equal to:  

\begin{equation*}
\displaystyle Vol(\mathcal{L}(\C P^{N}),d\widetilde{\mathcal{M}}) = \frac{Vol(U(\C P^{N},g_{0}))}{Vol(U(\C P^{1}, g_{0}))} = \frac{1}{2 \pi^{2}} \times \frac{\sigma(2N-1)\pi^{N}}{N!}
\end{equation*}
\begin{equation}
\displaystyle = \frac{\sigma(2N-1)\pi^{N-2}}{2N!}. \bigskip 
\end{equation}

It is a fundamental fact of K\"ahler geometry that closed complex submanifolds of a compact K\"ahler manifold $(X,h)$ are area minimizing in their homology classes -- this follows from the calibrated structure given by the K\"ahler form, cf. \cite{HL1}.  Moreover, for any compact complex curve $\Sigma$ and any holomorphic mapping $F$ of $\Sigma$ to a compact K\"ahler manifold $(X,h)$, it follows from Lemma \ref{elementary_lemma} that: 

\begin{equation}
\displaystyle E_{2}(F) = 2A(\Sigma,F^{*}h), \bigskip 
\end{equation}
where $A(\Sigma,F^{*}h)$ is equal to the area of $F(\Sigma)$ and is the minimum area of any cycle representing the class of $F(\Sigma)$ in $H_{2}(X;\Z)$.  For any holomorphic mapping $F:(\C P^{N},g_{0}) \rightarrow (X,h)$, letting $A^{\star}$ be this minimum area for $\mathcal{P} \in \mathcal{L}(\C P^{N})$, Lemma \ref{energy_formula_lemma} then implies: 

\begin{equation*}
\displaystyle E_{2}(F) = \frac{2\pi N}{\sigma(2N-1)} \int\limits_{\mathcal{L}(\C P^{N})} E_{2}(F|_{\mathcal{P}}) d\widetilde{\mathcal{M}} 
\end{equation*}
\begin{equation}
\displaystyle = \frac{2\pi N}{\sigma(2N-1)} \times Vol(\mathcal{L}(\C P^{N}),d\widetilde{\mathcal{M}}) \times 2A^{\star} = \frac{2 \pi^{N-1}}{(N-1)!} A^{\star}. \bigskip
\end{equation}

We will see that this coincides with a sharp lower bound for the energy of any Lipschitz mapping from $(\C P^{N},g_{0})$ to a Riemannian manifold $(M,g)$:

\begin{theorem}
\label{cpn_thm}
Let $(\C P^{N},g_{0})$ be complex projective space with its canonical metric $g_{0}$ with sectional curvature $K$ satisfying $1 \leq K \leq 4$.  Let $F: (\C P^{N}, g_{0}) \rightarrow (M^{m},g)$ be a Lipschitz mapping to a Riemannian manifold $(M,g)$ and $A^{\star}$ the infimum of the areas of Lipschitz mappings $f:S^{2} \rightarrow (M,g)$ in the free homotopy class of $F_{*}(\C P^{1})$. \\ 

Then for all $p \geq 2$, 

\begin{equation}
\label{cpn_thm_eqn}
\displaystyle E_{p}(F) \geq \frac{\pi^{N}}{N!} \left( \frac{2N}{\pi} A^{\star} \right)^{\frac{p}{2}}. \medskip 
\end{equation}

Suppose equality holds for $p = 2$.  Then $F^{*}g$ is a positive semidefinite Hermitian bilinear form on $\C P^{N}$.  In particular, on the domain $\mathcal{V} \subseteq \C P^{N}$ on which $rk(dF) = 2N$, $F^{*}g$ is a Hermitian metric.  Letting $\omega^{*}$ denote the K\"ahler form of $F^{*}g$ on $\mathcal{V}$, for all $x \in \mathcal{V}$ and all $k = 1, 2, \dots, N-1$, $d(\omega^{*^{k}})$ vanishes on all complex subspaces of $T_{x}\mathcal{V}$ of complex dimension $k+1$.  In particular, for all complex surfaces $Y$ in $\C P^{N}$, $F^{*}g|_{Y \cap \mathcal{V}}$ is a K\"ahler metric, and $\omega^{*^{N-1}}$ is closed on $\mathcal{V}$.  $F(\mathcal{V})$ is a minimal submanifold of $(M,g)$, and the second fundamental form of $F(\mathcal{V})$ in $(M,g)$ can be diagonalized by a unitary basis of $F^{*}g$.  Equality for $p=2$ for $F:\C P^{N} \rightarrow (M,g)$ implies equality for $p=2$ for $F|_{\C P^{d}}$ for all linear subspaces $\C P^{d} \subseteq \C P^{N}$. \\ 

If $p > 2$ and equality holds for $p$, then equality holds for all $p  \geq 2$ and $F$ has constant energy density.  If $p > 2$, $\C P^{2} \subseteq \C P^{N}$ is a linearly embedded subspace and $F|_{\C P^{2}}$ is an immersion and realizes equality for $p$, then $F|_{\C P^{2}}$ is a homothety onto its image. 
\end{theorem}

Before proving Theorem \ref{cpn_thm} we will prove: 

\begin{lemma}
\label{kahler_form_prop}
Let $(X,h)$ be a Hermitian manifold of complex dimension $N$ and $\omega$ the K\"ahler form of the metric $h$.  Suppose that for all $x_{0} \in X$ and all complex lines $\Pi \subseteq T_{x_{0}}X$ there is a complex curve $\Sigma_{\Pi} \subseteq X$, with $\Pi$ tangent to $\Sigma_{\Pi}$, whose mean curvature vanishes at $x_{0}$. \\  

Then for all $k = 1, 2, \dots, N-1$, the exterior derivative of $\omega^{k}$ vanishes on all complex subspaces of complex dimension $k+1$.  In particular, $\omega^{N-1}$ is closed. 
\end{lemma}

\begin{proof}[Proof of Lemma \ref{kahler_form_prop}] Let $I$ denote the complex structure of $X$.  We will first verify the lemma when $k=1$:  let $\vec{w}_{1}, \vec{w}_{2} = I(\vec{w}_{1})$ be a unitary basis for a complex line $\Pi$ in a tangent space $T_{x_{0}}X$.  Let $\vec{v}$ be a unit vector orthogonal to $\Pi$ and $\Sigma$ a complex curve to which $\Pi$ is tangent. \\ 

Define normal coordinates on a neighborhood of $\Sigma$ about $x_{0}$ based on the frame $\vec{w}_{1}, \vec{w}_{2}$ fpr $T_{x_{0}}\Sigma$.  Extend $\vec{v}$ to an orthonormal frame for the normal space to $\Sigma$ at $x_{0}$, extend this frame to an orthonormal frame field for the normal bundle to $\Sigma$ over the normal coordinate neighborhood defined above and use these to define Fermi coordinates on a neighborhood of $x_{0}$ in $X$.  Let $W_{1}, W_{2}, V$ be the coordinate vector fields which coincide with $\vec{w}_{1}, \vec{w}_{2}, \vec{v}$ at $x_{0}$.  Then we have: 

\begin{equation*}
\displaystyle d\omega(\vec{v}, \vec{w}_{1}, \vec{w}_{2}) = V(\omega(W_{1}, W_{2})) - W_{1}(\omega(V,W_{2})) + W_{2}(\omega(V,W_{2})) \bigskip 
\end{equation*}
\begin{equation}
\displaystyle = V(h(I(W_{1}), W_{2})) - W_{1}(h(I(V),W_{2})) + W_{2}(h(I(V),W_{2})). \bigskip 
\end{equation}

Because $W_{1},W_{2}$ are tangent and $V$ is normal to to the complex submanifold $\Sigma$ of $X$, the terms $W_{1}(\omega(V,W_{2})) = W_{1}(h(I(V),W_{2}))$ and $W_{2}(\omega(V,W_{2})) = W_{2}(h(I(V),W_{2}))$ vanish, and we have: 

\begin{equation}
\label{kfpe1}
\displaystyle d\omega(\vec{v}, \vec{w}_{1}, \vec{w}_{2}) = V(\omega(W_{1}, W_{2})) = V(h(I(W_{1}), W_{2})). \bigskip 
\end{equation}

Although $I(W_{1})$ may not be equal to $W_{2}$ at $x \neq x_{0}$, the Hermitian property of $h$ implies that $h(I(W_{1}),I(W_{1})) \equiv h(W_{1},W_{1})$.  Together with (\ref{kfpe1}), this implies that at $x_{0}$, 

\begin{equation*}
\displaystyle d\omega(\vec{v}, \vec{w}_{1}, \vec{w}_{2}) = h(\nabla_{V} I(W_{1}),I(W_{1})) + h(W_{2},\nabla_{V} W_{2}) \bigskip 
\end{equation*}
\begin{equation*}
\displaystyle  = \frac{1}{2} V \left( h(I(W_{1}),I(W_{1})) + h(W_{2},W_{2}) \right) \bigskip 
\end{equation*}
\begin{equation}
\label{kfpe2}
\displaystyle = \frac{1}{2} V \left( h(W_{1},W_{1}) + h(W_{2},W_{2}) \right). \bigskip 
\end{equation}
Because $V,W_{1},W_{1}$ are coordinate vector fields from the same Fermi coordinate system defined above, $\nabla_{V}W_{1} - \nabla_{W_{1}}V = [V,W_{1}] = 0$ and $\nabla_{V}W_{2} - \nabla_{W_{2}}V = [V,W_{2}] = 0$.  By (\ref{kfpe2}), we then have: 

\begin{equation*}
\displaystyle d\omega(\vec{v}, \vec{w}_{1}, \vec{w}_{2}) = h(\nabla_{V} W_{1},W_{1}) + h(W_{2},\nabla_{V} W_{2}) \bigskip 
\end{equation*}
\begin{equation}
\displaystyle = h(\nabla_{W_{1}} V,W_{1}) + h(W_{2},\nabla_{W_{2}} V). \bigskip 
\end{equation}

This is the negative of the mean curvature of $\Sigma$ at $x_{0}$ in the normal direction determined by $\vec{v}$, which is $0$ by assumption.  Because $d\omega$ vanishes on all triples of the form $\vec{v},\vec{w}_{1},\vec{w}_{2} = I(\vec{w}_{1})$ as above, it vanishes on all complex subspaces of complex dimension $2$.  This establishes Lemma \ref{kahler_form_prop} when $k = 1$. \\  

For $2 \leq k \leq N-1$, let $\vec{w}_{1},\vec{w}_{2} = I(\vec{w}_{1}),\dots,\vec{w}_{2k-1},\vec{w}_{2k} = I(\vec{w}_{2k-1})$ be a unitary frame and $\vec{v}$ a unit vector orthogonal to $span(\vec{w}_{1},\dots,\vec{w}_{2k})$.  We then have:  

\begin{equation}
\label{kfpe3}
\displaystyle d(\omega^{k})(\vec{v}, \vec{w}_{1},\dots,\vec{w}_{2k}) = k d\omega \wedge \omega^{k-1}(\vec{v}, \vec{w}_{1},\dots,\vec{w}_{2k}). \bigskip 
\end{equation}

Letting $\vec{w}_{0} = \vec{v}$ for notational convenience, and letting $\mathcal{S}$ denote the permutations of $0,1,2,\dots,2k-1,2k$ with $\sigma(0) < \sigma(1) < \sigma(2)$ and $\sigma(3) < \cdots < \sigma(2k)$, (\ref{kfpe3}) implies that $d(\omega^{k})(\vec{v}, \vec{w}_{1},\dots,\vec{w}_{2k})$ is equal to: 

\begin{equation}
\displaystyle k \sum\limits_{\sigma \in \mathcal{S}} sgn(\sigma) d\omega(\vec{w}_{\sigma(0)},\vec{w}_{\sigma(1)},\vec{w}_{\sigma(2)})\omega^{k-1}(\vec{w}_{\sigma(3)},\dots,\vec{w}_{\sigma(2k)}). \bigskip 
\end{equation}

Unless $\left(\sigma(3),\dots,\sigma(2k) \right) = \left(1,2,\dots,2j,2j+3,\dots,2k \right)$ for some $j = 1, 2, \dots k$, we have $\omega^{k-1}(\vec{w}_{\sigma(3)},\dots,\vec{w}_{\sigma(2k)}) = 0$.  In that case, $\left(\sigma(0),\sigma(1),\sigma(2) \right) = \left(0,2j+1,2j+2 \right)$ and $d\omega(\vec{w}_{\sigma(0)},\vec{w}_{\sigma(1)},\vec{w}_{\sigma(2)}) = 0$ by the $k = 1$ case.  Because $d(\omega^{k})$ vanishes on all $(2k+1)$-tuples $\vec{v},\vec{w}_{1},\dots,\vec{w}_{2k}$ as above, it vanishes on all complex subspaces of complex dimension $k+1$.  \end{proof}

\begin{proof}[Proof of Theorem \ref{cpn_thm}] Let $I$ denote the complex structure of $\C P^{N}$.  By (\ref{norm_squared_formula}), 

\begin{equation*}
\displaystyle E_{p}(F) = \int\limits_{\C P^{N}} |dF_{x}|^{p} dVol_{g_{0}} 
\end{equation*}
\begin{equation}
\label{cpn_pf_eqn_1}
\displaystyle = \left( \frac{2N}{\sigma(2N-1)} \right)^{\frac{p}{2}} \int\limits_{\C P^{N}} \left( \int\limits_{U_{x}(\C P^{N},g_{0})} |dF(\vec{u})|^{2} d\vec{u} \right)^{\frac{p}{2}} dVol_{g_{0}}. \medskip 
\end{equation}

For $x \in \C P^{N}$, let $G_{1}^{\C}(x)$ be the space of complex lines in $T_{x}\C P^{N}$.  $G_{1}^{\C}(x)$ is then a copy of $\C P^{N-1}$, and the fibration $\mathsf{T}:U(\C P^{N}, g_{0}) \rightarrow \mathcal{L}(\C P^{N})$ factors through a quotient mapping which is given fibrewise by $U_{x}(\C P^{N},g_{0}) \rightarrow G_{1}^{\C}(x)$.  For $\Pi \in G_{1}^{\C}(x)$, let $U_{\Pi}$ be the unit circle in $\Pi$ in the metric $g_{0}$.  For each such $\Pi$ at each point $x$ where $F$ is differentiable, $F^{*}g|_{\Pi}$ is a positive semidefinite, symmetric $2$-form and can be diagonalized relative to the metric $g_{0}|_{\Pi}$.  Letting $\vec{u}_{1}, \vec{u}_{2}$ be eigenvectors for $F^{*}g|_{\Pi}$ which are orthonormal in $g_{0}$, we have: 

\begin{equation*}
\displaystyle \int\limits_{U_{\Pi}} |dF(\vec{u})|^{2} d\vec{u} = \int\limits_{0}^{2\pi} \left( \cos^{2}(\theta) |dF(\vec{u}_{1})|^{2} + \sin^{2}(\theta) |dF(\vec{u}_{2})|^{2} \right) d\theta \medskip  
\end{equation*}   
\begin{equation}
\label{cpn_pf_eqn_2}
\displaystyle = \pi \left( |dF(\vec{u}_{1})|^{2} + |dF(\vec{u}_{2})|^{2} \right) \geq 2\pi |dF(\vec{u}_{1})||dF(\vec{u}_{2})| = 2\pi |\det(dF|_{\Pi})|. \bigskip 
\end{equation}

For $p = 2$, (\ref{cpn_pf_eqn_1}), (\ref{cpn_pf_eqn_2}) and Fubini's theorem imply: 

\begin{equation*}
\displaystyle E_{2}(F) = \frac{2N}{\sigma(2N-1)} \int\limits_{\C P^{N}} \int\limits_{G_{1}^{\C}(x)} \int\limits_{U_{\Pi}} |dF(\vec{u})|^{2} \ d\vec{u} \ d\Pi \ dVol_{g_{0}} \bigskip 
\end{equation*}
\begin{equation}
\label{cpn_pf_eqn_3}
\displaystyle \geq \frac{4N\pi}{\sigma(2N-1)} \int\limits_{\C P^{N}} \int\limits_{G_{1}^{\C}(x)} |\det(dF|_{\Pi})| d\Pi dVol_{g_{0}} = \frac{4N\pi}{\sigma(2N-1)} \int\limits_{\mathcal{L}(\C P^{N})} |F(\mathcal{P})| d\widetilde{\mathcal{M}}, \bigskip 
\end{equation}
where $|F(\mathcal{P})|$ is the area of the image via $F$ of a complex projective line $\mathcal{P}$ in $\C P^{N}$ and where the measure on $G_{1}^{\C}(x)$ is its canonical measure as a quotient of the unit sphere $U_{x}(\C P^{N},g_{0})$.  As in the proof of Theorem \ref{rpn_thm}, we have used the fact that $F|_{\mathcal{P}}$ is Lipschitz, so that $F(\mathcal{P})$ represents a closed integral $2$-current with a well-defined mass $|F(\mathcal{P})|$, for all $\mathcal{P} \in \mathcal{L}(\C P^{N})$. \\  

For $p > 2$, (\ref{cpn_pf_eqn_1}), (\ref{cpn_pf_eqn_2}), Fubini's theorem and H\"older's inequality imply:  

\begin{equation}
\label{cpn_pf_eqn_4}
\displaystyle E_{p}(F) \geq \frac{(4N\pi)^{\frac{p}{2}}}{\sigma(2N-1)^{\frac{p}{2}}Vol(\C P^{N})^{\frac{p-2}{2}}} \left( \int\limits_{\mathcal{L}(\C P^{N})} |F(\mathcal{P})| d\widetilde{\mathcal{M}} \right)^{\frac{p}{2}}. \medskip 
\end{equation}

Since $|F(\mathcal{P})| \geq A^{\star}$, (\ref{cpn_pf_eqn_3}) and (\ref{cpn_pf_eqn_4}) imply: 

\begin{equation}
\label{cpn_pf_eqn_5}
\displaystyle E_{p}(F) \geq \frac{(4N\pi)^{\frac{p}{2}}}{\sigma(2N-1)^{\frac{p}{2}}Vol(\C P^{N})^{\frac{p-2}{2}}} \left( A^{\star} Vol(\mathcal{L}(\C P^{N}),d\widetilde{M}) \right)^{\frac{p}{2}}, \medskip 
\end{equation}
which is (\ref{cpn_thm_eqn}). \\ 

Suppose equality holds for $p=2$. \\ 

$F$ is therefore smooth.  Equality holds in (\ref{cpn_pf_eqn_2}) for all $\Pi \in G_{1}^{\C}(x)$, at all $x \in \C P^{N}$.  Equality in (\ref{cpn_pf_eqn_2}) implies that $|dF(\vec{u})|$ is $U(1)$-invariant on $\Pi$.  $F^{*}g$ is therefore a $U(1)$-invariant, positive semidefinite bilinear form on $T_{x}\C P^{N}$.  In particular, $F^{*}g$ is a Hermitian metric on $\mathcal{V}$.  We also have that $|F(\mathcal{P})| = A^{\star}$ for all $\mathcal{P} \in \mathcal{L}(\C P^{N})$ and, because $F|_{\mathcal{P}}$ is conformal, that $F|_{\mathcal{P}}$ minimizes energy in its homotopy class of mappings $F:\C P^{1} \rightarrow (M,g)$ for all $\mathcal{P} \in \mathcal{L}$.  By Lemma \ref{energy_formula_lemma}, equality then holds for $F:(\C P^{d},g_{0}) \rightarrow (M,g)$ for all linear subspaces $\C P^{d} \subseteq \C P^{N}$. \\ 

To estabslish the minimality of $F(\mathcal{V})$ and the properties of its second fundamental form, let $x_{0} \in \mathcal{V}$ and $\vec{n}$ a unit normal vector to $F(\C P^{N})$ in $(M,g)$ at $F(x_{0})$.  Let $S_{\vec{n}}^{\C P^{N}}$ be the shape operator of $F(\C P^{N})$ in the normal direction $\vec{n}$.  Let $\vec{u}_{0}$ be a principal vector for $S_{\vec{n}}^{\C P^{N}}$, with $|S_{\vec{n}}^{\C P^{N}}(\vec{u}_{0})|$ maximal, and let $\mathcal{P}_{0} = \mathsf{T}(\vec{u}_{0})$.  Let $S_{\vec{n}}^{\mathcal{P}_{0}}$ be the shape operator of $\mathcal{P}_{0}$ in the normal direction $\vec{n}$.  Then $\vec{u}_{0}$ is also a principal vector for $S_{\vec{n}}^{\mathcal{P}_{0}}$.  $F(\mathcal{P}_{0} \cap \mathcal{V})$ is minimal in $(M,g)$ because $F(\mathcal{P}_{0})$ minimizes area in its homotopy class of mappings $\C P^{1} \rightarrow (M,g)$.  By the minimality of $F(\mathcal{P}_{0} \cap \mathcal{V})$ and the fact that $F^{*}g$ is Hermitian, $I(\vec{u}_{0})$ is also a principal vector for $S_{\vec{n}}^{\mathcal{P}_{0}}$ and $g\left(S_{\vec{n}}^{\mathcal{P}_{0}}(I(\vec{u}_{0})),I(\vec{u}_{0})\right) = -g \left(S_{\vec{n}}^{\mathcal{P}_{0}}(\vec{u}_{0}),\vec{u}_{0}\right)$.  In particular, $|S_{\vec{n}}^{\mathcal{P}_{0}}(I(\vec{u}_{0}))| = |S_{\vec{n}}^{\mathcal{P}_{0}}(\vec{u}_{0})|$.  Because $|S_{\vec{n}}^{\C P^{N}}(I(\vec{u}_{0}))| \geq |S_{\vec{n}}^{\mathcal{P}_{0}}(I(\vec{u}_{0}))| = |S_{\vec{n}}^{\mathcal{P}_{0}}(\vec{u}_{0})|$ and $|S_{\vec{n}}^{\C P^{N}}(\vec{u}_{0})|$ is maximal, this implies that $I(\vec{u}_{0})$ is also a principal vector for $S_{\vec{n}}^{\C P^{N}}$, and that the principal curvature of $S_{\vec{n}}^{\C P^{N}}$ along $I(\vec{u}_{0})$ is the negative of its principal curvature along $\vec{u}_{0}$. \\ 

Now let $\vec{u}_{1}$ be a principal vector for $S_{\vec{n}}^{\C P^{N}}$ which maximizes $|S_{\vec{n}}^{\C P^{N}}(\vec{u}_{1})|$ in the subspace of $T_{x_{0}}\C P^{N}$ which is orthogonal to $span(\vec{u}_{0},I(\vec{u}_{0}))$ in the metric $F^{*}g$, and let $\mathcal{P}_{1} = \mathsf{T}(\vec{u}_{1})$.  As above, $\vec{u}_{1}$ is a principal vector for $S_{\vec{n}}^{\mathcal{P}_{1}}$, which implies that $I(\vec{u}_{1})$ is a principal vector for $S_{\vec{n}}^{\mathcal{P}_{1}}$ whose principal curvature is the negative of the principal curvature of $\vec{u}_{1}$, and therefore that $\vec{u},I(\vec{u})$ are principal vectors of $S_{\vec{n}}^{\C P^{N}}$ whose principal curvatures are equal in magnitude and opposite in sign.  Continuing in this way, we construct a unitary basis for $T_{x_{0}}\C P^{N}$ which diagonalizes $S_{\vec{n}}^{\C P^{N}} $and shows that the mean curvature of $F(\mathcal{V})$ at $x_{0}$ in the normal direction $\vec{n}$ is $0$.  This implies that $F(V)$ is minimal in $(M,g)$.  Because $\mathcal{P} \cap \mathcal{V}$ is minimal in $(\mathcal{V},F^{*}g)$ for all $\mathcal{P} \in \mathcal{L}(\C P^{N})$, Lemma \ref{kahler_form_prop} implies that the exterior derivative of $\omega^{*^{k}}$ vanishes on all complex subspaces tangent to $\mathcal{V}$ of complex dimension $k+1$. \\ 

Now suppose $p > 2$ and equality holds for $p$. \\ 

Assuming only that $F$ is Lipschitz, equality then holds in (\ref{cpn_pf_eqn_2}) for almost all $\Pi \in G_{1}^{\C}(x)$, at almost all $x \in \C P^{N}$.  This implies that equality holds for $p=2$, and therefore that $F$ is smooth.  Equality must also hold in H\"older's inequality in (\ref{cpn_pf_eqn_4}).  This implies that $\int_{U_{x}(\C P^{N},g_{0})} |dF(\vec{u})|^{2} d\vec{u}$ is a constant function of $F$, which implies that $F$ has constant energy density. \\ 

Suppose $\C P^{2} \subseteq \C P^{N}$ is a linear subspace, $p > 2$, $F|_{\C P^{2}}$ is an immersion and realizes equality in (\ref{cpn_thm_eqn}) for $p$, and therefore for all $p \in [2,\infty)$.  The equality for $p = 2$ implies that $F^{*}g$ is a K\"ahler metric on $\C P^{2}$.  Letting $\omega^{*}$ be the K\"ahler form of $F^{*}g$ as above, for all $\C P^{1} \subseteq \C P^{2}$, $\int_{\C P^{1}}\omega^{*} = A^{\star}$.  Letting $\widetilde{\omega}$ be the K\"ahler form of $g_{0}$ on $\C P^{2}$, this implies that $\omega^{*}$ is cohomologous to $(\frac{A^{\star}}{\pi})\widetilde{\omega}$, and therefore that $Vol(\C P^{2},F^{*}g) = \frac{A^{\star^{2}}}{2}$.  For $p >4$, the lower bound in Theorem \ref{cpn_thm} therefore coincides with the lower bound in Lemma \ref{elementary_lemma}.  Because equality in (\ref{cpn_thm_eqn}) holds for all $p \geq 2$, including $p > 4$, $F$ realizes equality in Lemma \ref{elementary_lemma} for $p > 4$ and is therefore a homothety onto its image. \end{proof}

Ohnita \cite{Oh1} has proven that if $\phi:(\C P^{N},g_{0}) \rightarrow (M,g)$ is a stable harmonic map from $\C P^{N}$ with its canonical metric $g_{0}$ to any Riemannian manifold $(M,g)$, then $\phi$ is pluriharmonic.  For continuous maps, this condition is equivalent to the statement that $\phi|_{\Sigma}$ is harmonic for all complex curves $\Sigma \subseteq \C P^{N}$.  The conclusion of Theorem \ref{kahler_thm} that $F^{*}g$ is a K\"ahler metric on complex surfaces $Y \subseteq \C P^{N}$ is similar to a result of Burns, Burstall, de Bartolomeis and Rawnsley \cite[Theorem 3]{BBdBR1} that if $\phi:(M^{4},g) \rightarrow (\mathcal{Z},h)$ is stable harmonic map from a closed, real-analytic Riemannian $4$-manifold $(M^{4},g)$ to a Hermitian symmetric space $(\mathcal{Z},h)$ and there is a point of $M$ at which the rank of $d\phi$ is at least $3$, then there is a unique K\"ahler structure on $M$ with respect to which $\phi$ is holomorphic. \\ 

For $\C P^{1}$, the lower bound in Theorem \ref{cpn_thm} coincides with the bound implied by Lemma \ref{elementary_lemma}, so equality for $p > 2$ for $F:(\C P^{1},g_{0}) \rightarrow (M,g)$ in Theorem \ref{cpn_thm} implies that $F$ is a homothety onto its image.  For mappings of $(\C P^{N},g_{0})$, $N \geq 2$, let $Vol_{g_{0}}(\C P^{N}, F^{*}g)$ be the invariant of the mapping $F:(\C P^{N},g_{0}) \rightarrow (M,g)$ defined in Lemma \ref{elementary_lemma}.  We can then ask whether $Vol_{g_{0}}(\C P^{N}, F^{*}g)$ satisfies the following inequality along the lines of Pu's and Gromov's inequalities in Theorems \ref{pu_thm} and \ref{gromov_thm}: 

\begin{equation}  
\label{pu_gromov_type_inequality}
\displaystyle Vol_{g_{0}}(\C P^{N}, F^{*}g) \geq \frac{A^{\star^{N}}}{N!}. \bigskip 
\end{equation} 

For mappings for which (\ref{pu_gromov_type_inequality}) holds, the lower bound for $E_{p}(F)$ in Lemma \ref{elementary_lemma} is bounded below by the lower bound in Theorem \ref{cpn_thm}.  Mappings which realize equality in Theorem \ref{cpn_thm} for $p > 2$ and satisfy (\ref{pu_gromov_type_inequality}) must therefore be homotheties.  However, it is known that one cannot replace the limit (\ref{stable_2_systole}) in Theorem \ref{gromov_thm} by the minimum area of a current representing a generator of $H_{2}(\C P^{N};\Z)$, so (\ref{pu_gromov_type_inequality}) need not hold in general.  In fact, there are Riemannian metrics on $\C P^{2}$ of arbitrarily small volume for which the minimum area of a cycle generating $H_{2}(\C P^{N};\Z)$ is $1$.  This is an example of a phenomenon known as systolic freedom and is discussed in \cite{CK1}. \\ 

Also, unlike in the proof of Theorem \ref{rpn_thm} for real projective space, where one knows that all length minimizing paths in a free homotopy class are smoothly immersed closed geodesics, the infima of the area and energy in a homotopy class of mappings $f:S^{2} \rightarrow (M,g)$, even if they are realized, may not be realized by a smooth immersion.  This is discussed by Sacks and Uhlenbeck in \cite{SU}, where they prove that if $(M,g)$ is a compact Riemannian manifold with $\pi_{2}(M) \neq 0$, there is a generating set for $\pi_{2}(M)$ which consists of conformal, branched minimal immersions of $S^{2}$ which minimize area and energy in their homotopy classes. \\  

When one has more information about the regularity of mappings which minimize area and energy in the homotopy class of $F_{*}(\C P^{1})$ in Theorem \ref{cpn_thm}, one may be able to draw stronger conclusions about the regularity of energy minimizing maps $F:(\C P^{N},g_{0}) \rightarrow (M,g)$.  This is the basis for our proof of Theorem \ref{cpn_holom_thm}.  

\begin{proof}[Proof of Theorem \ref{cpn_holom_thm}] Let $F:(\C P^{N},g_{0}) \rightarrow (X,h)$ be a Lipschitz mapping to a compact, simply connected K\"ahler manifold.  We will suppose $F_{*}(\C P^{1}) \in H_{2}(X;\Z)$ can be represented by a holomorphic curve -- the case in which $F_{*}(\C P^{1}) \in H_{2}(X;\Z)$ can be represented by an antiholomorphic curve follows by the same argument. \\ 

Suppose equality holds in Theorem \ref{cpn_thm} for $p = 2$.  This implies $F$ is smooth.  By the Hurewicz theorem, the natural mapping $\pi_{2}(X) \rightarrow H_{2}(X;\Z)$ is an isomorphism, so the family of mappings $f:S^{2} \rightarrow X$ which are homotopic to $F|_{\C P^{1}}$ coincides with the family of mappings which are homologous.  Letting $\omega_{h}$ be the K\"ahler form of the metric $h$, because $F_{*}([\C P^{1}]) \in H_{2}(X;\Z)$ can be represented by a holomorphic mapping $f:\C P^{1} \rightarrow X$, for $\mathcal{P} \in \mathcal{L}(\C P^{N})$ we have:   

\begin{equation}
\displaystyle \int\limits_{\mathcal{P}} F^{*}\omega_{h} = A^{\star} = \int\limits_{\mathcal{P}} |det(dF|_{\mathcal{P}})|. \bigskip 
\end{equation}

This implies that equality holds in the pointwise inequality $F^{*}\omega_{h{}} \leq |det(dF|_{\mathcal{P}})|$ at all $x \in \mathcal{P}$, which implies that $F:\mathcal{P} \rightarrow X$ is holomorphic. \\  

Choose affine coordinates $(z^{1}, z^{2}, \cdots, z^{N})$ on a neighborhood $\mathcal{U}$ of a point $x_{0}$ in $\C P^{N}$ and holomorphic coordinates on a neighborhood of $F(x_{0})$ in $X$.  For each $i = 1, 2, \dots, N$ and each fixed $z_{0}^{1}, z_{0}^{2}, \dots, z_{0}^{i-1}$, $z_{0}^{i+1}, \dots, z_{0}^{N}$, the affine line $(z_{0}^{1}, \dots, z^{i}, \dots, z_{0}^{N})$ in these coordinates on $\mathcal{U}$ corresponds to a complex subspace of complex dimension $2$ through the origin in $\C^{N+1}$, via the association $(z^{1}, z^{2}, \cdots, z^{N}) \rightarrow [1:z^{1}:z^{2}:\cdots :z^{N}]$ of affine and homogeneous coordinates on $\mathcal{U}$, and thus to a complex projective line $\mathcal{P}$ in $\C P^{N}$.  Because $F|_{\mathcal{P}}$ is holomorphic, each of the functions $F_{1}, F_{2}, \dots, F_{d}$ representing $F$ in these coordinates on $X$ is holomorphic when restricted to any such affine line in $\mathcal{U}$.  Osgood's Lemma \cite{GR_0} then implies that $F$ is holomorphic on $\mathcal{U}$ and therefore that $F$ is a holomorphic mapping of $\C P^{N}$. \\  

If $p > 2$ and equality holds for $p$, then by Theorem \ref{cpn_thm}, equality holds for all $p \in [2,\infty)$.  Equality for $p=2$ implies that $F$ is holomorphic.  Because $\int_{\C P^{1}}F^{*}\omega_{h} = A^{\star}$ for linear $\C P^{1} \subseteq \C P^{N}$, $F^{*}\omega_{h}$ is cohomologous to $(\frac{A^{\star}}{\pi})\widetilde{\omega}$, where $\widetilde{\omega}$ is the K\"ahler form of $g_{0}$.  This implies that, in the notation of Lemma \ref{elementary_lemma}, $Vol_{g_{0}}(\C P^{N},F^{*}h) = \frac{A^{\star^{N}}}{N!}$, and that for $p>2N$, the lower bound in Theorem \ref{cpn_thm} coincides with the lower bound in Lemma \ref{elementary_lemma}.  This implies that $F$ realizes equality in Lemma \ref{elementary_lemma} for $p > 2N$, and therefore that $F$ is a homothety onto its image. \end{proof}

Burns, Burstall, de Bartolomeis and Rawnsley \cite{BBdBR1} have proven that any stable harmonic map $\phi:(\C P^{N},g_{0}) \rightarrow (\mathcal{Z},h)$ to a compact, simple Hermitian symmetric space $(\mathcal{Z},h)$ is holomorphic or antiholomorphic, generalizing an earlier result of Ohnita \cite{Oh1} that holomorphic and antiholomorphic mappings are the only stable harmonic maps between complex projective spaces $(\C P^{N_{1}},g_{0})$ and $(\C P^{N_{2}},g_{0})$ with canonical metrics. 
 

\section{Quaternionic Projective Space}
\label{quaternions} 


In this section, we will prove Theorem \ref{hpn_thm}.  In the proof, we will make use of the twistor fibration $\Psi: \C P^{2N+1} \rightarrow \mathbb{H}P^{N}$.  This mapping, described in \cite{Sa1} and \cite[Ch. 14]{Be2}, gives a parametrization by $\C P^{2N+1}$ of the complex structures on tangent spaces to $\mathbb{H}P^{N}$ which satisfy a local compatibility condition with the the canonical metric $g_{0}$.  The results in Theorem \ref{hpn_thm} for $(\mathbb{H} P^{N},g_{0})$, $N \geq 2$ are different from the corresponding results for $(\mathbb{H} P^{1},g_{0})$, which is isometric to a rescaling of $(S^{4},g_{0})$.  These results for $(\mathbb{H} P^{1},g_{0})$ therefore follow from Lemma \ref{elementary_lemma}, and throughout the discussion below we will restrict to the case of $\mathbb{H} P^{N}$, $N \geq 2$ unless we explicitly state otherwise. \\ 

We fix a basis $1, I, J, K$ for the $\R$-algebra $\mathbb{H}$, satisfying the quaternion relations $I^{2} = J^{2} = K^{2} = IJK = -1$, and a Euclidean inner product on $\mathbb{H}^{N+1}$, which we view as a right $\mathbb{H}$-module.  Throughout, we will identify $\C$ with the subfield $\R + \R I$ of $\mathbb{H}$.  In this way we will view $\mathbb{H}^{N+1}$ and all of its $\mathbb{H}$-submodules and quotients as complex vector spaces.  Letting $U(1)$ denote the group of unit complex numbers and $Sp(1)$ the group of unit quaternions, we identify $\mathbb{H} P^{N}$ with the quotient $\mathbb{H}^{N+1} / \mathbb{H}^* = S^{4N + 3} / Sp(1)$, and we identify $\C P^{2N + 1}$ with $\mathbb{H}^{N+1} / \C^* = S^{4N + 3} / U(1)$.  Given a $1$-dimensional $\mathbb{H}$-submodule of $\mathbb{H}^{N+1}$, we will denote by $l$ both the associated point in $\mathbb{H} P^{N}$ and the subspace of $\mathbb{H}^{N+1}$.  We will write $Z_{l}$ for the fibre of the twistor fibration $\Psi : \C P^{2N + 1} \rightarrow \mathbb{H} P^{N}$ over $l$, which can be defined as follows: \\ 

For each point $p$ in $l \cap S^{4N+3}$, the differential of the Hopf fibration $\Phi:S^{4N+3} \rightarrow \mathbb{H}P^{N}$ gives an $\R$-linear isomorphism from $l^{\perp} \subseteq \mathbb{H}^{N+1}$ to $T_{l} \mathbb{H} P^{N}$.  This isomorphism can be used to transfer the $\C$-vector space and $\mathbb{H}$-module structures of $l^{\perp}$ to $T_{l} \mathbb{H} P^{N}$, but the structures induced on $T_{l} \mathbb{H} P^{N}$ in this way depend on the choice of the point $p$.  In particular, because the action of $Sp(1)$ on $l^{\perp}$ is not $U(1)$-equivariant, the complex structure on $T_{l} \mathbb{H} P^{N}$ induced in this way depends on $p$.  The subgroup of $Sp(1)$ which acts $U(1)$-equivariantly is $U(1)$ itself.  The complex structures induced on $T_{l} \mathbb{H} P^{N}$ in this way are therefore parametrized by $Sp(1)/U(1)$, which is canonically identified with the projectivization of $l \cong \C^{2}$ as a complex vector space.  The collection of complex structures induced on $T_{l}\mathbb{H}P^{N}$ as above, paramatrized by $Sp(1)/U(1)$, is the fibre $Z_{l}$ at $l$ of the twistor fibration $\Psi:\C P^{2N+1} \rightarrow \mathbb{H} P^{N}$. \\ 

Viewing $\C P^{2N+1}$ and $\mathbb{H}P^{N}$ as quotients of $S^{4N+3}$ by $U(1)$, resp. $Sp(1)$, the twistor fibration corresponds to the mapping $S^{4N+3}/U(1) \rightarrow S^{4N+3}/Sp(1)$.  The canonical metrics on $\C P^{2N+1}$ and $\mathbb{H} P^{N}$ are the base metrics of Riemannian submersions from $(S^{4N+3},g_{0})$ and the twistor fibration is itself a Riemannian submersion $(\C P^{2N+1},g_{0}) \rightarrow (\mathbb{H} P^{N},g_{0})$, with totally geodesic fibres $\C P^{1} \subseteq \C P^{2N+1}$.  Later it will be important that, although the $\C$-vector space and $\mathbb{H}$-module structures induced on $T_{l} \mathbb{H} P^{N}$ as above depend on the choice of $p \in l \cap S^{4N+3}$, some families of subspaces of $T_{l} \mathbb{H} P^{N}$ associated to these structures are independent of $p$.  We record these facts in the following: 

\begin{lemma}
\label{well_defined_lemma}
Let $l \in \mathbb{H} P^{N}$ and $\vec{v} \in T_{l}\mathbb{H} P^{N}$.  Then the $\mathbb{H}$-submodule of $T_{l}\mathbb{H} P^{N}$ generated by $\vec{v}$ in an $\mathbb{H}$-module structure induced on $T_{l}\mathbb{H} P^{N}$ as above, as a subspace of $T_{l}\mathbb{H} P^{N}$, is independent of $p$. \\ 

Likewise, for any complex structure $\tau$ on $T_{l} \mathbb{H} P^{N}$ which is induced by the twistor fibration as above and any complex $1$-dimensional subspace $\lambda$ of the complex vector space $(T_{l} \mathbb{H} P^{N},\tau)$, in any $\mathbb{H}$-module structure induced on $T_{l} \mathbb{H} P^{N}$ as above, $\lambda$ is contained in a $1$-dimensional $\mathbb{H}$-submodule of $T_{l} \mathbb{H} P^{N}$.  As a subspace of $T_{l} \mathbb{H} P^{N}$, this submodule is independent of the $\mathbb{H}$-module structure.  
\end{lemma}

We will denote the subspaces of $T_{l} \mathbb{H} P^{N}$ described in Lemma \ref{well_defined_lemma} by $\vec{v}\mathbb{H}$, resp. $\lambda\mathbb{H}$. 

\begin{proof} To show that $\vec{v}\mathbb{H}$ is well-defined, let $p_{1}$, $p_{2} \in l \cap S^{4N+3}$, and let $\vec{v}_{1},\vec{v}_{2} \in l^{\perp}$ such that $d\Phi_{p_i}(\vec{v}_{i}) = \vec{v}$, where $d\Phi$ is the differential of the Hopf fibration $\Phi : S^{4N+3} \rightarrow \mathbb{H} P^{N}$.  There is a unique $q \in Sp(1)$ such that $p_{2} = p_{1}q$, and $d\Phi_{p_{2}}^{-1} \circ d\Phi_{p_{1}}$ gives an $\R$-linear isomorphism of $l^{\perp}$, which is given by scalar multiplication by $q$.  We therefore have $\vec{v}_{2} = \vec{v}_{1}q$.  For $h \in \mathbb{H}$, $\vec{v}h$ in the $\mathbb{H}$-module structure induced via $p_{1}$ is: 

\begin{equation}
\label{well_defined_pf_eqn_1}
\displaystyle d\Phi_{p_{1}}(\vec{v}_{1}h) = d\Phi_{p_{2}} \circ d\Phi_{p_{2}}^{-1} \circ d\Phi_{p_{1}}(\vec{v}_{1}h) = d\Phi_{p_{2}}(\vec{v}_{1}hq) = d\Phi_{p_{2}}(\vec{v}_{2}q^{-1}hq), \bigskip 
\end{equation}
which corresponds to $\vec{v}q^{-1}hq$ in the $\mathbb{H}$-module structure induced via $p_{2}$.  This shows that the $\mathbb{H}$-submodule generated by $\vec{v}$ in the $\mathbb{H}$-module structure induced via $d\Phi_{p_{1}}$ is contained in the $\mathbb{H}$-submodule generated by $\vec{v}$ in the structure induced by $d\Phi_{p_{2}}$, and conversely.  \\ 

To see that the subspace $\lambda\mathbb{H}$ for a complex $1$-dimensional subspace $\lambda$ of $(T_{l} \mathbb{H} P^{N}, \tau)$ is well-defined for $\tau \in Z_{l}$, note that $\lambda \subseteq \vec{v}\mathbb{H}$ for any non-zero $\vec{v} \in \lambda$. \end{proof}

For $\tau \in Z_{l}$, we define the following family of bases of the complex vector space $(T_{l}\mathbb{H}P^{N},\tau)$: 

\begin{definition}
\label{goodframes}
Given a point $l$ in $\mathbb{H} P^{N}$ and $\tau \in Z_{l}$, let $\mathcal{F}_{l}(\tau)$ be the set of ordered bases $e_{1}, e_{2}, e_{3}, \cdots, e_{4N-1}, e_{4N}$ for $T_{l}\mathbb{H}P^{N}$, which are orthonormal in the canonical metric $g_{0}$, such that:  
\bigskip 
\begin{enumerate}
\item $e_{2j} = \tau(e_{2j-1})$ for all $j = 1, 2, \cdots, 2N$. 
\bigskip 

\item $e_{4i-3}, e_{4i-2}, e_{4i-1}, e_{4i}$ span a quaternionic line in $T_{l}\mathbb{H}P^{N}$ for each $i = 1, 2, \cdots, N$; that is, $e_{4i-3}\mathbb{H} = e_{4i-2}\mathbb{H} = e_{4i-1}\mathbb{H} = e_{4i}\mathbb{H}$. 
\bigskip 
\end{enumerate}
\end{definition}

We will write $\mathcal{F}(N)$ for the volume of the space of frames $\mathcal{F}_{l}(\tau)$ in Definition \ref{goodframes}, when viewed as a subset of the Steifel manifold of frames for $\R^{4N}$.  We then have:  

\begin{equation}
\displaystyle \mathcal{F}(1) = \sigma(3)\sigma(1) = 4\pi^{3}, \medskip 
\end{equation}
and for $N \geq 2$, 
\begin{equation}
\displaystyle \mathcal{F}(N) = Vol(\mathbb{H}P^{N-1})\mathcal{F}(N-1)\mathcal{F}(1). \bigskip 
\end{equation}

By convention, we define $\mathcal{F}(0) = 1$.  Because the volume of $(\mathbb{H}P^{N},g_{0})$ is equal to $\frac{\pi^{2N}}{(2N+1)!}$ for all $N \geq 1$, we then have: 

\begin{equation}
\label{Vol_F_formula}
\displaystyle \mathcal{F}(N) = \frac{\pi^{N(N+2)}4^{N}}{\prod_{j=1}^{N-1}(2j+1)!}. \bigskip 
\end{equation}

It will also be helpful to note that: 

\begin{equation}
\label{Vol_F_ratios}
\displaystyle  \frac{\mathcal{F}(1)\mathcal{F}(N-1)}{\mathcal{F}(N)} = \frac{(2N-1)!}{\pi^{2N-2}}, \medskip 
\end{equation}
\begin{equation}
\label{Vol_F_ratios_2}
\displaystyle  \frac{\mathcal{F}(N-2)}{\mathcal{F}(N)} = \frac{(2N-2)!(2N-3)!}{16 \pi^{4N}}. \bigskip 
\end{equation} 

Given a mapping $F:(\mathbb{H}P^{N},g_{0}) \rightarrow (M^{m},g)$, the following formula for the $4$-energy of $F$ at a point $l \in \mathbb{H}P^{N}$ is an elementary consequence of (\ref{energy_frame_eqn}):  

\begin{equation*}
\displaystyle e_{4}(F)_{l} = \frac{1}{\mathcal{F}(N)} \int\limits_{\mathcal{F}_{l}(\tau)} \left( \sum\limits_{i=1}^{2N} |dF(e_{2i-1})|^{2} + |dF(e_{2i})|^{2}\right)^{2} de \bigskip 
\end{equation*}
\begin{equation}
\label{H_formula}
\displaystyle = \frac{1}{\pi\mathcal{F}(N)} \int\limits_{Z_{l}}\int\limits_{\mathcal{F}_{l}(\tau)} \left( \sum\limits_{i=1}^{2N} |dF(e_{2i-1})|^{2} + |dF(e_{2i})|^{2}\right)^{2} de \ d\tau. \bigskip 
\end{equation}

For a complex structure $\tau$ induced on $T_{l} \mathbb{H} P^{N}$ via the twistor fibration, we define the following family of subspaces of $T_{l}\mathbb{H} P^{N}$: 
 
\begin{definition}
\label{hyperlagrangian}
Given $l \in \mathbb{H}P^{N}$ and $\tau \in Z_{l}$ as above, let $\mathcal{C}_{k}(\tau)$ be the family of complex subspaces $V$ of the complex vector space $(T_{l} \mathbb{H} P^{N},\tau)$ which are of complex dimension $k$ and which have the following property:  for any complex $1$-dimensional subspace $\lambda$ of $V$, the orthogonal complement of $\lambda$ in the quaternionic line $\lambda\mathbb{H}$ is also orthogonal to $V$.  
\end{definition}

We will adapt the notation introduced for $\mathbb{H}$-modules generated by elements and subsets of $T_{l}\mathbb{H} P^{N}$ above:  for a complex subspace $\widetilde{\lambda}$ of $\mathbb{H}^{N+1}$, we will write $\widetilde{\lambda}\mathbb{H}$ for the $\mathbb{H}$-module generated by $\widetilde{\lambda}$. 

\begin{lemma}
\label{hyperlagrangian_lemma}
Let $\tau \in Z_{l}$ and $V \in C_{k}(\tau)$.  Then there is a unique complex subspace $\widetilde{V} \subseteq l^{\perp} \subseteq \mathbb{H}^{N+1}$ such that, for any $p \in l \cap S^{4N+3}$ which induces the complex structure $\tau$ via the differential $d\Phi_{p}:l^{\perp} \rightarrow T_{l}\mathbb{H}P^{N}$ of the Hopf fibration as above, $d\Phi_{p}^{-1}(V) \cap l^{\perp} = \widetilde{V}$.  For any complex subspace $\widetilde{\lambda}$ of $\widetilde{V}$ of complex dimension $1$, the orthogonal complement of $\widetilde{\lambda}$ in the quaternionic line $\widetilde{\lambda} \mathbb{H}$ which it generates in $\mathbb{H}^{N+1}$ is orthogonal to $\widetilde{V}$.  
\end{lemma}

\begin{proof} For a given $p_{0} \in l \cap S^{4N+3}$ which induces the almost-complex structure $\tau$, let $\widetilde{V}_{p_{0}}$ be be the horizontal pre-image of $V$ via the Hopf fibration $\Phi:S^{4N+3} \rightarrow \mathbb{H} P^{N}$.  If $p_{1}$ is another point in $l \cap S^{4N+3}$ which induces $\tau$, then the unique $q \in Sp(1)$ with $p_{1} = p_{0}q$ in fact belongs to $U(1)$.  The complex subspace $\widetilde{V}_{p_{0}}$ is preserved by multiplication by $q$ and therefore coincides with the horizontal lift of $V$ at $p_{1}$.  This common horizontal preimage is $\widetilde{V}$.  The orthogonality of $\widetilde{\lambda}^{\perp} \subseteq \widetilde{\lambda}\mathbb{H}$ and $\widetilde{V}$ is equivalent to the same property for complex $1$-dimesional subspaces $\lambda$ of $V$. \end{proof} 

For each $V \in \mathcal{C}_{2}(\tau)$, the $\mathbb{H}$-module generated by $V$ in an $\mathbb{H}$-module structure induced on $T_{l}\mathbb{H} P^{N}$ as above is again a well-defined subspace, of real dimension $8$, which is an $\mathbb{H}$-submodule for any $\mathbb{H}$-module structure induced on $T_{l}\mathbb{H}P^{N}$ as above, as in Lemma \ref{well_defined_lemma}.  We will denote this $V\mathbb{H}$.  $\mathcal{C}_{2}(\tau)$ is the base of a fibration $\mathcal{F}_{l}(\tau) \rightarrow \mathcal{C}_{2}(\tau)$, which sends a frame $e_{1}, e_{2}, e_{3}, \cdots, e_{4N-1}, e_{4N}$ to the $\R$-span of $e_{1}, e_{2}, e_{5}, e_{6}$.  We will equip $\mathcal{C}_{2}(\tau)$ with the measure pushed forward from $\mathcal{F}_{l}(\tau)$ via this mapping.  We then have:  

\begin{equation}
\label{Vol_C_eqn}
\displaystyle Vol(\mathcal{C}_{2}(\tau)) = \frac{\mathcal{F}(N)}{\sigma(3)\sigma(1)^{3}\mathcal{F}(N-2)} = \frac{\pi^{4N-5}}{(2N-2)!(2N-3)!}, \bigskip 
\end{equation}
where we have used (\ref{Vol_F_ratios_2}), the fact that the space of ordered frames $e_{1}, e_{2}, e_{5}, e_{6}$ for $V$ has volume $\sigma(3)\sigma(1)$ and the fact that for each such ordered frame, the volume of the space of frames $e_{3}, e_{4}, e_{7}, e_{8}$ for $V^{\perp} \subseteq V\mathbb{H}$ which together with $e_{1}, e_{2}, e_{5}, e_{6}$ give a frame $e_{1}, e_{2}, \cdots, e_{7}, e_{8}$ for $V\mathbb{H}$ as in Definition \ref{goodframes} is equal to $\sigma(1)^{2}$.  We will let $Gr_{1}^{\mathbb{H}}(l)$ denote the family of quaternionic $1$-dimensional subspaces of $T_{l}\mathbb{H}P^{N}$, that is, the family of $1$-dimensional $\mathbb{H}$-modules determined by $\vec{v} \in T_{l}\mathbb{H} P^{N}$ as in Lemma \ref{well_defined_lemma} -- this space is a copy of $\mathbb{H}P^{N-1}$, and we equip it with the canonical measure pushed forward from the measure on the unit sphere in $T_{l}\mathbb{H} P^{N}$ via the Hopf fibration.  The fundamental pointwise result which is the basis for Theorem \ref{hpn_thm} is: 

\begin{lemma}
\label{twistor_lemma}
Let $F:(\mathbb{H}P^{N},g_{0}) \rightarrow (M^{m},g)$ be a Lipschitz mapping and $l \in \mathbb{H}P^{N}$ a point at which $F$ is differentiable.  Then: 

\begin{equation}
\label{twistor_lemma_eqn}
\displaystyle e_{4}(F)_{l} \geq \scriptstyle \frac{16N^{2}(2N-2)!}{\pi^{2N-2}} \displaystyle \left( \int\limits_{Gr_{1}^{\mathbb{H}}(l)} |\det(dF|_{\Lambda})| d\Lambda + \scriptstyle \frac{(2N-2)!}{(2N-1)\pi^{2N-2}} \displaystyle \iint\limits_{Z_{l} \ \mathcal{C}_{2}(\tau)} |\det(dF|_{V})| dV d\tau \right). 
\end{equation}

Equality holds if and only $dF_{l}$ is a homothety. 
\end{lemma}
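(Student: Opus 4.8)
The plan is to prove the pointwise estimate (\ref{twistor_lemma_eqn}) directly from the averaged formula (\ref{H_formula}), by bounding the integrand frame-by-frame with a short chain of elementary inequalities and then converting the resulting frame-dependent sums of Jacobians into the invariant integrals over $Gr_1^{\mathbb{H}}(l)$ and $\mathcal{C}_2(\tau)$. First I would fix $\tau\in Z_l$ and a frame $e_1,\dots,e_{4N}\in\mathcal{F}_l(\tau)$ and set $u_i=|dF(e_{2i-1})|\,|dF(e_{2i})|$, so that $\lambda_i=\mathrm{span}(e_{2i-1},e_{2i})$ runs over the $2N$ complex lines of the frame. The arithmetic--geometric mean inequality gives $|dF(e_{2i-1})|^{2}+|dF(e_{2i})|^{2}\geq 2u_i$, so the integrand of (\ref{H_formula}) is at least $4\big(\sum_{i}u_i\big)^{2}$. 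The decisive step is then Cauchy--Schwarz applied to the $2N$ numbers $u_i$: since $\big(\sum_i u_i\big)^2\leq 2N\sum_i u_i^2$, one gets $\sum_i u_i^2\geq\frac{2}{2N-1}\sum_{i<i'}u_iu_{i'}$, and hence $\big(\sum_i u_i\big)^2=\sum_i u_i^2+2\sum_{i<i'}u_iu_{i'}\geq\frac{4N}{2N-1}\sum_{i<i'}u_iu_{i'}$. Finally, for each pair the $4$-dimensional Jacobian satisfies $|\det(dF|_{\lambda_i\oplus\lambda_{i'}})|\leq u_iu_{i'}$, so, summing over all $\binom{2N}{2}$ pairs,
\[
\Big(\sum_{i=1}^{2N}\big(|dF(e_{2i-1})|^{2}+|dF(e_{2i})|^{2}\big)\Big)^{2}\ \geq\ \frac{16N}{2N-1}\sum_{1\leq i<i'\leq 2N}\big|\det(dF|_{\lambda_{i}\oplus\lambda_{i'}})\big|.
\]
Here the $N$ same-block pairs have $\lambda_i\oplus\lambda_{i'}=\Lambda_j$, a quaternionic line, while the $2N(N-1)$ pairs from two different quaternionic lines have $\lambda_i\oplus\lambda_{i'}=V\in\mathcal{C}_2(\tau)$, by Definition \ref{hyperlagrangian}.

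Next I would substitute this bound into (\ref{H_formula}) and integrate. Because the quaternionic lines $\Lambda_j$ are independent of $\tau$, the same-block sum integrates over $Z_l$ to the factor $\mathrm{Vol}(Z_l)=\pi$, while the fibration $\mathcal{F}_l(\tau)\to Gr_1^{\mathbb{H}}(l)$, $e\mapsto\Lambda_1$, together with the symmetry of $\mathcal{F}_l(\tau)$, gives $\frac{1}{\mathcal{F}(N)}\int_{\mathcal{F}_l(\tau)}\sum_j|\det(dF|_{\Lambda_j})|\,de=\frac{N}{\mathrm{Vol}(Gr_1^{\mathbb{H}}(l))}\int_{Gr_1^{\mathbb{H}}(l)}|\det(dF|_\Lambda)|\,d\Lambda$. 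With $\mathrm{Vol}(Gr_1^{\mathbb{H}}(l))=\mathrm{Vol}(\mathbb{H}P^{N-1})=\frac{\pi^{2N-2}}{(2N-1)!}$ this collapses to exactly the coefficient $\frac{16N^2(2N-2)!}{\pi^{2N-2}}$ on the first term of (\ref{twistor_lemma_eqn}). For the different-block sum I would use the fibration $\mathcal{F}_l(\tau)\to\mathcal{C}_2(\tau)$, $e\mapsto\mathrm{span}(e_1,e_2,e_5,e_6)$, the equivalence of all $2N(N-1)$ different-block pairs under the structure group, and $\mathrm{Vol}(\mathcal{C}_2(\tau))$ from (\ref{Vol_C_eqn}); the ratio identities (\ref{Vol_F_ratios}) and (\ref{Vol_F_ratios_2}) then telescope the constants to the stated multiple of $\int_{Z_l}\int_{\mathcal{C}_2(\tau)}|\det(dF|_V)|\,dV\,d\tau$. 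The cleanest check that the bookkeeping is right is the homothety case $dF_l=c\cdot(\text{isometry})$: every Jacobian equals $c^4$, and one verifies that the two integrated terms sum to $e_4(F)_l=16N^2c^4$.

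For the equality statement I would trace back the three inequalities, each of which must then be an equality for almost every frame and every $\tau$. The arithmetic--geometric mean step forces $|dF(\vec u)|=|dF(\tau\vec u)|$; the Cauchy--Schwarz step forces all the $u_i$ to coincide; together these force $|dF(\vec u)|$ to be constant over the unit sphere of $T_l\mathbb{H}P^N$, so $dF_l$ is conformal at $l$. The Jacobian step $u_iu_{i'}=|\det(dF|_{\lambda_i\oplus\lambda_{i'}})|$ then forces $dF_l$ to carry orthonormal vectors to orthogonal ones, so $dF_l$ is a homothety. The converse is immediate, since a homothety realizes equality in every step.

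The main obstacle is identifying the correct pointwise inequality. The naive scheme—grouping same-block squared terms by the arithmetic--geometric mean into quaternionic Jacobians and cross terms into $\mathcal{C}_2$ Jacobians—does yield a valid, homothety-sharp bound, but with the \emph{wrong} ratio between the two terms, so it cannot reproduce (\ref{twistor_lemma_eqn}). The essential observation is that the uniform coefficient $\frac{16N}{2N-1}$ attached to \emph{all} $\binom{2N}{2}$ Jacobians arises from a single application of Cauchy--Schwarz in $2N$ variables, and it is exactly this factor of $2N$ (the number of complex lines) that makes the constants telescope through the volume ratios to the values claimed; once this is recognized, the remaining work is routine measure-theoretic accounting.
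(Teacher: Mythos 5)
Your proposal is correct and follows essentially the same route as the paper's proof: the same pointwise bound $\frac{16N}{2N-1}\sum_{i<i'}u_iu_{i'}$ on the integrand of (\ref{H_formula}) (your Cauchy--Schwarz step on the $u_i$ is exactly the Newton inequality $p_1^2 \geq p_2$ the paper invokes, merely applied after rather than before the arithmetic--geometric mean step), the same Hadamard bound identifying same-block pairs with quaternionic lines and cross-block pairs with elements of $\mathcal{C}_2(\tau)$, and the same frame-fibration volume accounting via (\ref{Vol_F_ratios}) and (\ref{Vol_F_ratios_2}). Your equality analysis also matches, with the minor observation that your final Hadamard-equality step is redundant, since $|dF(\vec u)|$ being constant on the unit sphere of $T_l\mathbb{H}P^N$ already forces $dF_l^{*}\,dF_l = c^2\,\mathrm{Id}$, i.e.\ a homothety.
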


\begin{proof} We begin with the identity for $e_{4}(F)_{l}$ from (\ref{H_formula}).  By Newton's inequality for symmetric functions (cf. \cite[p.113]{ES1}) and the arithmetic-geometric mean inequality, 

\begin{equation*}
\displaystyle e_{4}(F)_{l} \geq \scriptstyle \frac{16N}{\pi(2N-1)\mathcal{F}(N)} \displaystyle \int\limits_{Z_{l}}\int\limits_{\mathcal{F}_{l}(\tau)} \sum\limits_{i=1}^{2N-1} \sum\limits_{j=i+1}^{2N}  |dF(e_{2i-1})||dF(e_{2i})|dF(e_{2j-1})||dF(e_{2j})| de d\tau 
\end{equation*}
\begin{equation*}
\displaystyle = \scriptstyle \frac{16N}{\pi(2N-1)\mathcal{F}(N)} \displaystyle \text{\Huge{[}} \int\limits_{Z_{l}}\int\limits_{\mathcal{F}_{l}(\tau)} \sum\limits_{i=1}^{N} |dF(e_{4i-3})||dF(e_{4i-2})||dF(e_{4i-1})||dF(e_{4i})| de d\tau 
\end{equation*}
\begin{equation*}
\displaystyle + \int\limits_{Z_{l}}\int\limits_{\mathcal{F}_{l}(\tau)} \text{\Large{(}} \sum\limits_{j=1}^{N-1} \sum\limits_{k=j+1}^{N} |dF(e_{4j-3})||dF(e_{4j-2})||dF(e_{4k-3})||dF(e_{4k-2})| 
\end{equation*}
\begin{equation*}
\displaystyle + \sum\limits_{j=1}^{N-1} \sum\limits_{k=j+1}^{N} |dF(e_{4j-3})||dF(e_{4j-2})||dF(e_{4k-1})||dF(e_{4k})| 
\end{equation*}
\begin{equation*}
\displaystyle + \sum\limits_{j=1}^{N-1} \sum\limits_{k=j+1}^{N} |dF(e_{4j-1})||dF(e_{4j})||dF(e_{4k-3})||dF(e_{4k-2})| 
\end{equation*}
\begin{equation}
\label{twistor_lemma_pf_eqn_1}
\displaystyle + \sum\limits_{j=1}^{N-1} \sum\limits_{k=j+1}^{N} |dF(e_{4j-1})||dF(e_{4j})||dF(e_{4k-1})||dF(e_{4k})| \text{\Large{)}} de d\tau \text{\Huge{]}}. \bigskip  
\end{equation}

For each $\Lambda \in Gr_{1}^{\mathbb{H}}(l)$ and each orthonormal frame $e_{4i-3}, e_{4i-2}, e_{4i-1}, e_{4i}$ for $\Lambda$, we have:  

\begin{equation}
\label{twistor_lemma_pf_eqn_2}
\displaystyle |dF(e_{4i-3})||dF(e_{4i-2})||dF(e_{4i-1})||dF(e_{4i})| \geq |\det(dF|_{\Lambda})|. \bigskip 
\end{equation}	

Therefore, for each $\tau \in Z_{l}$, 

\begin{equation*}
\displaystyle \int\limits_{\mathcal{F}_{l}(\tau)} \sum\limits_{i=1}^{N} |dF(e_{4i-3})||dF(e_{4i-2})||dF(e_{4i-1})||dF(e_{4i})| de 
\end{equation*} 
\begin{equation}
\label{twistor_lemma_pf_eqn_3}
\displaystyle \geq N \mathcal{F}(1)\mathcal{F}(N-1) \int\limits_{Gr_{1}^{\mathbb{H}}(l)} |\det(dF|_{\Lambda})| d\Lambda, \bigskip  	
\end{equation}

Note that the constant $N \mathcal{F}(1)\mathcal{F}(N)$ in (\ref{twistor_lemma_pf_eqn_3}) is the product of the number of indices $i = 1, 2, \cdots, N$ in the summation, the volume $\mathcal{F}(1)$ of the space of frames for $\Lambda$ and the volume $\mathcal{F}(N-1)$ of the space of frames for $\Lambda^{\perp}$. \\ 

Similarly, for each $\tau \in Z_{l}$ and each $V \in \mathcal{C}_{2}(\tau)$, the space of frames $e_{4j-3}, e_{4j-2}, e_{4k-3}, e_{4k-2}$ for $V$ has volume $\sigma(3)\sigma(1) = 4\pi^{3}$.  The volume of the space of frames $e_{4j-1}, e_{4j}, e_{4k-1}, e_{4k}$ for $V^{\perp} \subseteq V\mathbb{H}$ has volume $\sigma(1)^{2} = 4\pi^{2}$, the space of frames for $V\mathbb{H}^{\perp}$ has volume $\mathcal{F}(N-2)$ and there are $\binom{N}{2}$ terms in the summation $\sum_{j=1}^{N-1} \sum_{k=j+1}^{N}$ at which the frame may occur.  The same is true for the other possible indexings of each frame, i.e. $e_{4j-3}, e_{4j-2}, e_{4k-1}, e_{4k}$; $e_{4j-1}, e_{4j}, e_{4k-3}, e_{4k-2}$ and $e_{4j-1}, e_{4j}, e_{4k-1}, e_{4k}$.  For each $\tau \in Z_{l}$, the second integral term in (\ref{twistor_lemma_pf_eqn_1}) is therefore bounded below by: 

\begin{equation}
\label{twistor_lemma_pf_eqn_4}
\displaystyle 32N(N-1)\pi^{5}\mathcal{F}(N-2) \int\limits_{\mathcal{C}_{2}(\tau)} |\det(dF|_{V})| dV. \bigskip 
\end{equation}

Combining (\ref{twistor_lemma_pf_eqn_1}) with the lower bounds in (\ref{twistor_lemma_pf_eqn_3}) and (\ref{twistor_lemma_pf_eqn_4}), noting that the integration over $Z_{l}$ in (\ref{twistor_lemma_pf_eqn_1}) adds a factor $\pi$ to the constant in (\ref{twistor_lemma_pf_eqn_3}) and using the identities (\ref{Vol_F_ratios}) gives (\ref{twistor_lemma_eqn}).  Equality requires equality in Newton's inequality and the arithmetic-geometric mean inequality in (\ref{twistor_lemma_pf_eqn_1}).  This implies that for all $\tau \in Z_{l}$ and all $\lbrace e_{1}, e_{2}, \dots, e_{4N} \rbrace \in \mathcal{F}_{l}(\tau)$, for all $i,j = 1, 2, \cdots 2N$, $|dF(e_{2i-1})|^{2} + |dF(e_{2i})|^{2} = |dF(e_{2j-1})|^{2} + |dF(e_{2j})|^{2}$ and $|dF(e_{2i-1})| = |dF(e_{2i})|$.  This implies $|dF(\vec{u})|$ is the same for all unit vectors $\vec{u}$ tangent to $\mathbb{H}P^{N}$ at $l$, so that $dF_{l}$ is a homothety.  \end{proof}

For each $\tau \in Z_{l}$ and each $V \in \mathcal{C}_{2}(\tau)$, $V$ is tangent to a totally geodesic submanifold of $\mathbb{H}P^{N}$ which is isometric to $\C P^{2}$ with its canonical metric, cf. \cite[Ch. 5]{Be1}.  In the following lemma, we will describe a construction of these totally geodesic submanifolds and establish some of their properties which we will use in the proof of Theorem \ref{hpn_thm}:  

\begin{lemma}
\label{tot_geo_lemma}

Let $l \in \mathbb{H} P^{N}$.  Let $\tau \in Z_{l}$ and $V \in C_{2}(\tau)$, and let $\widetilde{V} \subseteq l^{\perp}$ be the subspace associated to $V$ as in Lemma \ref{hyperlagrangian_lemma}.  Let $\lambda_{\tau}$ be the complex $1$-dimensional subspace of $l \cong \C^{2}$ which corresponds to the complex structure $\tau$, as a point in the projectivization of $l$ as a complex vector space, via the twistor fibration.  

\begin{enumerate}
	\item 
	\label{tot_geo_1}
	For any complex subspace $\lambda'$ of $\widetilde{V} \oplus \lambda_{\tau} \subseteq \mathbb{H}P^{N+1}$ of complex dimension $1$, the orthogonal complement to $\lambda'$ in $\lambda'\mathbb{H}$ is orthogonal to $\widetilde{V} \oplus \lambda_{\tau}$ in $\mathbb{H}^{N+1}$.  
	\medskip 
	
	\item 
	\label{tot_geo_2}
	Let $\widetilde{X}_{\tau}$ be the image of $\widetilde{V} \oplus \lambda_{\tau}$ in $\C P^{2N+1}$ via the projectivization of $\mathbb{H}^{N+1} = \C^{2N+2}$ as a complex vector space.  Then $\widetilde{X}_{\tau}$ maps injectively to $\mathbb{H} P^{N}$ via the twistor fibration, to a totally geodesic submanifold which we will denote $X_{\tau}$, and $V$ is the tangent space to $X_{\tau}$ at $l$. 
	\medskip 
		
	\item 
	\label{tot_geo_4}
	For each complex $1$-dimensional subspace $\lambda'$ of $\widetilde{V} \oplus \lambda_{\tau}$, let $l' = \lambda' \mathbb{H} \subseteq \mathbb{H}^{N+1}$ and let $\tau'$ be the complex structure on $T_{l'}\mathbb{H}P^{N}$ associated to $\lambda'$ via the twistor fibration, when we view $\lambda'$ as a point in the projectivization of $l'$ as a complex vector space.  Then $T_{l'}X_{\tau}$ is $\tau'$-invariant.  In particular, the submanifold $\widetilde{X}_{\tau}$ of $\C P^{2N+1}$ is a section of the twistor bundle over $X_{\tau}$ and the induced complex structure on $T\mathbb{H}P^{N}|_{X_{\tau}}$ preserves the tangent bundle $TX_{\tau}$ of $X_{\tau}$ and the normal bundle to $X_{\tau}$ in $\mathbb{H} P^{N}$.
\end{enumerate}
\end{lemma}

\begin{proof} {\em (\ref{tot_geo_1}):} For any $\cos (\theta) J + \sin (\theta) K \in Sp(1)$, and for any $v \in \widetilde{V}$, we have $v \left( \cos (\theta) J + \sin (\theta) K \right)$ orthogonal to both $\widetilde{V}$ and $\lambda_{\tau}$.  Likewise, for any $w \in \lambda_{\tau}$, $w \left( \cos (\theta) J + \sin (\theta) K \right)$ is orthogonal to both $\lambda_{\tau}$ and $\widetilde{V}$.  This implies that the same is true for any vector $u$ in $\widetilde{V} \oplus \lambda_{\tau}$, and therefore, for the complex $1$-dimensional subspace $\lambda' \subseteq \mathbb{H}^{N+1}$ spanned by $u$. \\

{\em (\ref{tot_geo_2}):} By Part (\ref{tot_geo_1}), if $v,w \in \widetilde{V} \oplus \lambda_{\tau}$ belong to the same $Sp(1)$ orbit in $\mathbb{H}^{N+1} \setminus \lbrace 0 \rbrace$, they in fact belong to the same $U(1)$ orbit.  This implies that the image of $\widetilde{V} \oplus \lambda_{\tau}$ in $\C P^{2N+1}$ maps injectively to $\mathbb{H} P^{N}$ via the twistor fibration $S^{4N+3}/U(1) \rightarrow S^{4N+3}/Sp(1)$.  The submanifold $\widetilde{X}_{\tau}$ of $\C P^{2N+1}$ is horizontal for the twistor fibration; that is, $\widetilde{X}_{\tau}$ meets the fibres of the twistor mapping orthogonally -- this implies that $\widetilde{X}_{\tau}$ maps isometrically to its image via the twistor fibration.  To see that $\widetilde{X}_{\tau}$ is horizontal, note that for any complex $1$-dimensional subspace $\lambda'$ of $\widetilde{V} \oplus \lambda_{\tau}$, by Part (\ref{tot_geo_1}), the orthogonal complement to $\lambda'$ in $\widetilde{V} \oplus \lambda_{\tau}$ is orthogonal to $l'$.  This orthogonal complement $\lambda'^{\perp} \subseteq \widetilde{V} \oplus \lambda_{\tau}$ gives the horizontal lift of $T_{\lambda'}\widetilde{X}_{\tau}$ for the Hopf fibration $S^{4N+3} \rightarrow \C P^{2N+1}$ at any point of $\lambda' \cap S^{4N+3}$, and the horizontal lift of the tangent space to the twistor fibre at such a point is contained in $l'$.  This also implies that $V = T_{l} X_{\tau}$. \\  

$\widetilde{X}_{\tau}$ is a linearly embedded $\C P^{2}$ in $\C P^{2N+1}$ and thus is totally geodesic.  To see that $X_{\tau}$ is totally geodesic in $\mathbb{H} P^{N}$, note that for any vector fields $V,W$ on $\mathbb{H} P^{N}$ which are tangent to $X_{\tau}$ along $X_{\tau}$, their horizontal lifts $\widetilde{V},\widetilde{W}$ are tangent to $\widetilde{X}_{\tau}$ along $\widetilde{X}_{\tau}$, and $[\widetilde{V},\widetilde{W}]$ is therefore tangent to $\widetilde{X}_{\tau}$ along $\widetilde{X}_{\tau}$.  In particular, this implies that $[\widetilde{V},\widetilde{W}]$ is horizontal for the twistor fibration along $\widetilde{X}_{\tau}$.  Ths implies via O'Neill's formula that the second fundamental form of $X_{\tau}$ in $\mathbb{H} P^{N}$ is the same as that of $\widetilde{X}_{\tau}$ in $\C P^{2N+1}$, under the natural identification of the normal bundle of $X_{\tau}$ in $\mathbb{H} P^{N}$ with its horizontal pre-image, and therefore that $X_{\tau}$ is totally geodesic in $\mathbb{H} P^{N}$. \\ 

{\em (\ref{tot_geo_4}):} Because the orthogonal complement $\lambda'^{\perp}$ of $\lambda'$ in $\widetilde{X} \oplus \lambda_{\tau}$ is a complex subspace of $l'^{\perp} \subseteq \mathbb{H}^{N+1}$, its image via the differential of the Hopf fibration $S^{4N+3} \rightarrow \mathbb{H} P^{N}$ is a complex subspace of $T_{l'}\mathbb{H} P^{N}$ in the complex structure associated to $\lambda'$ as a point in the projectivization of $l' \cong \C^{2}$ via the twistor fibration.  We have seen in Part (\ref{tot_geo_2}) that this image is the tangent space to $X_{\tau}$ at $l'$. \end{proof} 

We will write $\mathcal{C}(\mathbb{H}P^{N})$ for the space of all such totally geodesic $X_{\tau}$ in $\mathbb{H}P^{N}$.  We will equip $\mathcal{C}(\mathbb{H}P^{N})$ with the measure pushed forward from the total space of the bundle over $\C P^{2N+1}$ whose fibre over $\tau$ is $\mathcal{C}_{2}(\tau)$, via the natural fibration $V \mapsto X_{\tau}$ of this space over $\mathcal{C}(\mathbb{H}P^{N})$.  We then have: 
 
\begin{equation*}
\displaystyle Vol \left(\mathcal{C}(\mathbb{H}P^{N})\right) = \frac{Vol(\C P^{2N+1},g_{0})Vol(\mathcal{C}_{2}(\tau))}{Vol(\C P^{2})} \bigskip 
\end{equation*}
\begin{equation}
\label{Vol_tot_geo_space}
\displaystyle = \frac{2\pi^{6N-6}}{(2N+1)!(2N-2)!(2N-3)!}. \bigskip 
\end{equation}

We will let $\mathcal{H}(\mathbb{H}P^{N})$ be the space of linearly embedded $\mathbb{H}P^{1} \subseteq \mathbb{H}P^{N}$, with the measure pushed forward from the unit tangent bundle $U(\mathbb{H}P^{N},g_{0})$.  Then: 

\begin{equation}
\displaystyle Vol\left(\mathcal{H}(\mathbb{H}P^{N})\right) = \frac{Vol\left(U(\mathbb{H}P^{N},g_{0})\right)}{Vol\left(U(\mathbb{H}P^{1},g_{0})\right)} = \frac{6\pi^{4N-4}}{(2N+1)!(2N-1)!}. \bigskip 
\end{equation}

The constant $K_{N}$ in Theorem \ref{hpn_thm} can be defined in terms of $\mathcal{H}(\mathbb{H}P^{N})$ and $\mathcal{C}(\mathbb{H}P^{N})$:  

\begin{equation*}
\displaystyle K_{N} = \frac{16N^{2}(2N+1)!(2N-2)!}{\pi^{4N-2}} \left( Vol\left(\mathcal{H}(\mathbb{H}P^{N})\right) + \frac{(2N-2)!}{(2N-1)\pi^{2N-2}} Vol \left(\mathcal{C}(\mathbb{H}P^{N})\right) \right) \bigskip 
\end{equation*}
\begin{equation}
\label{hpn_const_formula}
\displaystyle = \frac{32N^{2}(2N+1)}{\pi^{2}(2N-1)}. \bigskip 
\end{equation}

Given a triple of complex structures $I,J,K$ induced by locally defined sections of the twistor bundle on a neighborhood of $\mathbb{H}P^{N}$, which satisfy the quaternion relations $I^{2} = J^{2} = K^{2} = IJK = -Id$, we can form their associated K\"ahler forms $\omega_{I}$, $\omega_{J}$, $\omega_{K}$ with the canonical metric $g_{0}$ on $\mathbb{H} P^{N}$.  The 4-form $\omega_{I}^{2} + \omega_{J}^{2} + \omega_{K}^{2}$ is independent of the choice of $I,J,K$.  This form therefore coincides with a canonical, globally-defined 4-form on $\mathbb{H}P^{N}$, known as the fundamental 4-form or Kraines 4-form, whose powers generate the cohomology of $\mathbb{H}P^{N}$ and calibrate linear subspaces $\mathbb{H}P^{k} \subseteq \mathbb{H}P^{N}$, cf. \cite{Be1,Kr1,Kr2}.  More precisely, we define $\Omega$ to be the form which coincides with $\frac{1}{\pi^{2}}\left( \omega_{I}^{2} + \omega_{J}^{2} + \omega_{K}^{2} \right)$ for any choice of $I,J,K$ as above.  $\Omega$ is a closed, parallel form satisfying $\langle \Omega, \mathbb{H}P^{1} \rangle = 1$.  In $H^{4}(\mathbb{H}P^{N};\R)$, the cohomology class of $\Omega$ is the image of a generator of $H^{4}(\mathbb{H}P^{N};\Z)$ via the natural homomorphism $H^{4}(\mathbb{H}P^{N};\Z) \rightarrow H^{4}(\mathbb{H}P^{N};\R)$.  For any orthonormal frame $e,I(e),J(e),K(e)$ for a quaternionic line in $T_{l}\mathbb{H}P^{N}$, $\Omega(e,I(e),J(e),K(e)) = \frac{6}{\pi^{2}}$.  More generally, $(\frac{\pi^{2}}{6})\Omega$ has comass $\equiv 1$ and gives a calibration of $(\mathbb{H}P^{N},g_{0})$, whose calibrated submanifolds are precisely the linearly embedded $\mathbb{H}P^{1}$ in $\mathbb{H}P^{N}$.  The powers of $\Omega$, appropriately rescaled, likewise give calibrations whose calibrated submanifolds are linear subspaces $\mathbb{H}P^{d} \subseteq \mathbb{H} P^{N}$.  If $X \in \mathcal{C}(\mathbb{H}P^{N},g_{0})$, then by choosing $I$ to coincide with the complex structure $\tau$ on $T_{l}X$ along $X$ as in Lemma \ref{tot_geo_lemma} (5), we have that for any orthonormal frame $e_{1}, I(e_{1}), e_{2}, I(e_{2})$ for $T_{l}X$, $\Omega(e_{1}, I(e_{1}), e_{2}, I(e_{2})) = \frac{2}{\pi^{2}}$.  Since $Vol(\C P^{2}) = \frac{\pi^{2}}{2} = 3Vol(\mathbb{H}P^{1})$, this implies that $X$ also represents a generator of $H_{4}(\mathbb{H}P^{N};\Z)$ and is ``one third calibrated" by $\Omega$.  

\begin{proof}[Proof of Theorem \ref{hpn_thm}] Let $F:(\mathbb{H}P^{N},g_{0}) \rightarrow (M^{m},g)$ be a mapping as above and $p \geq 4$.  By Lemma \ref{twistor_lemma}, 

\begin{equation*}
\displaystyle E_{p}(F) = \int\limits_{\mathbb{H}P^{N}} |dF|^{p} dVol_{g_{0}} \bigskip        
\end{equation*}
\begin{equation}
\label{hpn_thm_pf_eqn_1}
\displaystyle \geq \scriptstyle \left( \frac{16N^{2}(2N-2)!}{\pi^{2N-2}} \right)^{\frac{p}{4}} \displaystyle \int\limits_{\mathbb{H}P^{N}} \left( \int\limits_{Gr_{1}^{\mathbb{H}}(l)} |\det(dF|_{\Lambda})| d\Lambda + \scriptstyle  \frac{(2N-2)!}{(2N-1)\pi^{2N-2}} \displaystyle \iint\limits_{Z_{l} \ \mathcal{C}_{2}(\tau)} |\det(dF|_{V})| dV \right)^{\frac{p}{4}} dVol_{g_{0}}. \bigskip 
\end{equation}

For $p = 4$ this immediately implies that $E_{p}(F)$ is bounded below by: 

\begin{equation}
\label{hpn_thm_pf_eqn_2}
\scriptstyle \left( \frac{16N^{2}(2N-2)!}{\pi^{2N-2}} \right) \displaystyle \left( \int\limits_{\mathcal{H}(\mathbb{H}P^{N})} |F(\mathcal{Q})| d\mathcal{Q} + \scriptstyle \frac{(2N-2)!}{(2N-1)\pi^{2N-2}} \displaystyle \int\limits_{\mathcal{C}(\mathbb{H}P^{N})} |F(X)| dX \right). \bigskip 
\end{equation}

For $p > 4$, (\ref{hpn_thm_pf_eqn_1}) and H\"older's inequality imply that $E_{p}(F)$ is bounded below by: 

\begin{equation*}
\label{hpn_thm_pf_eqn_3}
\scriptstyle \left(  \frac{((2N+1)!)^{p-4}(16N^{2}(2N-2)!)^{p}}{\pi^{(4N-2)p - 8N}}   \right)^{\frac{1}{4}}  \displaystyle \left( \int\limits_{\mathcal{H}(\mathbb{H}P^{N})} |F(\mathcal{Q})| d\mathcal{Q} + \scriptstyle \frac{(2N-2)!}{(2N-1)\pi^{2N-2}} \displaystyle \int\limits_{\mathcal{C}(\mathbb{H}P^{N})}|F(X)| dX \right)^{\frac{p}{4}}. \bigskip 
\end{equation*}

Because all $\mathcal{Q} \in \mathcal{H}(\mathbb{H}P^{N},g_{0})$ and $X \in \mathcal{C}(\mathbb{H}P^{N},g_{0})$ represent generators of $H_{4}(\mathbb{H} P^{N};\Z)$ we have $|F(\mathcal{Q})|, |F(X)| \geq B^{\star}$, which implies the inequality (\ref{hpn_thm_eqn}), albeit with nonstrict rather than strict inequality. \\ 

For (\ref{hpn_thm_eqn}) to be an equality, equality would have to hold a.e. in (\ref{hpn_thm_pf_eqn_1}), and therefore in Lemma \ref{twistor_lemma}.  This would imply that $F^{*}g$ is equal a.e. to $\varphi(x)g_{0}$, where $\varphi(x)$ is an a.e.-defined non-negative function on $\mathbb{H}P^{N}$.  Equality would also require that $F$ take almost all $\mathcal{Q},X \subseteq \mathbb{H}P^{N}$ to area minimizing currents in their homology class in $H_{4}(M;\Z)$.  Together, these conditions would imply that for almost all $\mathcal{Q} \in \mathcal{H}(\mathbb{H}P^{N})$ and almost all $X \in \mathcal{C}(\mathbb{H}P^{N})$, $\varphi$ is defined a.e. on $\mathcal{Q}$, resp. $\mathcal{X}$ and  

\begin{equation}
\label{hpn_thm_pf_eqn_4}
\displaystyle \int\limits_{\mathcal{Q}} \varphi(x)^{2} dx = \int\limits_{X} \varphi(x)^{2} dx = B^{\star}, \bigskip 
\end{equation}
where the integration in (\ref{hpn_thm_pf_eqn_4}) is with respect to the volume form of $g_{0}|_{\mathcal{Q}}$, resp. $g_{0}|_{X}$.  Letting $\zeta: U(\mathbb{H}P^{N},g_{0}) \rightarrow \mathbb{H}P^{N}$ be the bundle projection of the unit tangent bundle of $\mathbb{H}P^{N}$, we would then have:  

\begin{equation*}
\displaystyle \int\limits_{\mathbb{H}P^{N}} \varphi(l)^{2} dVol_{g_{0}} = \frac{1}{\sigma(4N-1)} \int\limits_{U(\mathbb{H}P^{N},g_{0})} \varphi(\zeta(\vec{u}))^{2} d\vec{u} \bigskip 
\end{equation*}
\begin{equation*}
\displaystyle = \frac{1}{\sigma(4N-1)} \int\limits_{\mathcal{H}(\mathbb{H}P^{N})} \int\limits_{U(\mathcal{Q},g_{0})} \varphi(\zeta(\vec{u}))^{2} d\vec{u} d\mathcal{Q} \bigskip 
\end{equation*}
\begin{equation}
\label{hpn_thm_pf_eqn_5}
\displaystyle = \frac{\sigma(3)}{\sigma(4N-1)} \int\limits_{\mathcal{H}(\mathbb{H}P^{N})} \int\limits_{\mathcal{Q}} \varphi(x)^{2} dx d\mathcal{Q} = \left(\frac{6\pi^{2N-2}}{(2N+1)!}\right) B^{\star}. \bigskip 
\end{equation}

On the other hand, we would also have: 

\begin{equation*}
\displaystyle \int\limits_{\mathbb{H}P^{N}} \varphi(l)^{2} dVol_{g_{0}} = \frac{1}{\pi} \int\limits_{\C P^{2N+1}} \varphi(\Psi(\tau))^{2} d\tau \bigskip   
\end{equation*}
\begin{equation*}
\displaystyle = \frac{(2N-2)!(2N-3)!}{\pi^{4N-4}} \int\limits_{\C P^{2N+1}} \int\limits_{\mathcal{C}_{2}(\tau)} \varphi(\Psi(\tau))^{2} dV d\tau \bigskip 
\end{equation*}
\begin{equation}
\label{hpn_thm_pf_eqn_6}
\displaystyle = \frac{(2N-2)!(2N-3)!}{\pi^{4N-4}} \int\limits_{\mathcal{C}(\mathbb{H}P^{N})} \int\limits_{X} \varphi(x)^{2} dxdX = \left( \frac{2\pi^{2N-2}}{(2N+1)!} \right) B^{\star}. \bigskip 
\end{equation}

The right-hand sides of (\ref{hpn_thm_pf_eqn_5}) and (\ref{hpn_thm_pf_eqn_6}) can only be equal if $B^{\star} = 0$, which implies that equality cannot hold in the inequality in Theorem \ref{hpn_thm} \end{proof}


A theorem of White \cite{Wh2} shows that if $F:(W^{d},g) \rightarrow (N^{n},h)$ is a mapping of compact, connected, oriented Riemannian manifolds with $F_{*}: \pi_{1}(W) \rightarrow \pi_{1}(N)$ surjective and $\dim(W) \geq 3$, then the infimum of the areas of mappings homotopic to $F$ is equal to the minimum mass of an integral current $T$ representing $F([W])$ in $H_{d}(N;\Z)$.  If $M$ in Theorem \ref{hpn_thm} is simply connected, $B^{\star}$ is therefore equal to the infimum of the areas of mappings $f:S^{4} \rightarrow M$ in the free homotopy class of $F_{*}(\mathbb{H}P^{1})$, as in Theorems \ref{rpn_thm} for $\R P^{n}$ and \ref{cpn_thm} for $\C P^{N}$. \\ 

For $p \geq 4N$ Lemma \ref{elementary_lemma} implies that the identity mapping of $(\mathbb{H} P^{N},g_{0})$ is $p$-energy minimizing in its homotopy class, so the result given by Theorem \ref{hpn_thm} is not optimal in this setting.  However for all $p \geq 4$, the proof of Theorem \ref{hpn_thm} implies that the identity mapping of $(\mathbb{H}P^{N},g_{0})$ minimizes $p$-energy in its homotopy class among maps $F$ such that the average volume of $F(\C P^{2})$ for $\C P^{2} \in \mathcal{C}(\mathbb{H}P^{N})$ is at least $Vol(\C P^{2},g_{0}) = \frac{\pi^{2}}{2}$.  More generally, with more precise information about the minima or averages of the volumes of $F(\mathbb{H}P^{1})$ and $F(\C P^{2})$ for $\mathbb{H}P^{1} \in \mathcal{H}(\mathbb{H}P^{N})$ and $\C P^{2} \in \mathcal{C}(\mathbb{H}P^{N})$, one can deduce stronger lower bounds for energy functionals of $F$. \\ 

One possible approach to investigating the $p$-energy of mappings homotopic to the identity of $\mathbb{H} P^{N}$ is to use the homotopy lifting property to consider the family of mappings $\widetilde{F}: \C P^{2N+1} \rightarrow \C P^{2N+1}$ which cover a mapping $F: \mathbb{H} P^{N} \rightarrow \mathbb{H} P^{N}$ via the twistor fibration.  In particular, this may be helpful in finding optimal lower bounds for the $p$-energy of mappings homotopic to the identity mapping of $\mathbb{H} P^{N}$ and determining when the identity is $p$-energy minimizing in its homotopy class.  


\end{document}